\setlist[enumerate]{label=(\roman*)} 
\newtheorem{Th}{Theorem}
\theoremstyle{definition}
\newtheorem{Lem}[Th]{Lemma}
\newtheorem{Def}[Th]{Definition}
\newtheorem{Exa}[Th]{Example}
\newtheorem{Rem}[Th]{Remark}
\newtheorem{Ass}[Th]{Assumption}
\newtheorem*{Th*}{Theorem}
\newtheorem*{Lem*}{Lemma}
\newtheorem*{Def*}{Definition}
\newtheorem*{Not*}{Notation}
\newtheorem*{Fac*}{Fact}
\newtheorem*{Exa*}{Example}
\newtheorem*{Obs*}{Observation}
\newtheorem*{Cor*}{Corollary}
\newtheorem*{Rem*}{Remark}
\newtheorem*{Pro*}{Proposition}
\newtheorem*{Exe*}{Exercise}
\newtheorem*{Que*}{Question}
\newtheorem*{Prob*}{Problem}
\newtheorem*{Ass*}{Assumption}
\DeclarePairedDelimiter\abs{\lvert}{\rvert}%
\DeclarePairedDelimiter\norm{\lVert}{\rVert}%
\let\oldabs\abs
\def\abs{\@ifstar{\oldabs}{\oldabs*}}
\let\oldnorm\norm
\def\norm{\@ifstar{\oldnorm}{\oldnorm*}}
\newcommand{\R}{\mathbb{R}}
\newcommand{\N}{\mathbb{N}}
\newcommand{\ud}{\mathrm{d}}
\newcommand{\ds}{\, \mathrm{d}s}
\newcommand{\dt}{\, \mathrm{d}t}
\newcommand{\dL}{\, \mathrm{d}L}
\newcommand{\dX}{\, \mathrm{d}X}
\newcommand{\E}{\mathbb{E}}
\newcommand{\1}{\mathbbm{1}} 
\let\phi\varphi
\let\epsilon\varepsilon
\DeclareMathOperator{\Id}{\mathrm{Id}}
\newcommand\Item[1][]{%
  \ifx\relax#1\relax  \item \else \item[#1] \fi
  \abovedisplayskip=0pt\abovedisplayshortskip=0pt~\vspace*{-\baselineskip}}
\tikzset{node distance=2cm, auto}
\tikzset{
  symbol/.style={
    draw=none,
    every to/.append style={
      edge node={node [sloped, allow upside down, auto=false]{$#1$}}}
  }
}
\title{Stochastic integration with respect to \\ cylindrical L\'evy processes\\
by $p$-summing operators}
\date{9 December 2019}
\author{Tomasz Kosmala \thanks{tomasz.kosmala@kcl.ac.uk} \qquad\qquad\qquad  Markus Riedle \thanks{markus.riedle@kcl.ac.uk} \\Department of Mathematics \\ King's College London\\ London WC2R 2LS\\ United Kingdom}
\begin{document}

\maketitle

\begin{abstract}
We introduce a stochastic integral with respect to cylindrical L\'evy processes with finite $p$-th weak moment for $p\in[1,2]$. The space of integrands consists of $p$-summing operators between Banach spaces of martingale type $p$. We apply the developed integration theory to establish the existence of a solution for a stochastic evolution equation driven by a cylindrical L\'evy process.
\end{abstract}
{\bf AMS 2010 Subject Classification:} 47B10, 60G51, 46T12, 60H15\\
{\bf Keywords and Phrases:} cylindrical L\'evy processes, stochastic integration in Banach spaces, stochastic partial differential equations, $p$-summing operators. 




\section{Introduction}
Cylindrical L\'evy processes are a natural generalisation of cylindrical Brownian motions to the non-Gaussian setting, and they can serve as a model of random perturbation of partial differential equations or other dynamical systems. As a generalised random process, cylindrical L\'evy processes do not attain values in the underlying space, and they do not enjoy a L\'evy-It{\^o} decomposition in general. Since conventional approaches to stochastic integration rely on either stopping time arguments or a semi-martingale decomposition in the one or other form, a completely novel method for stochastic integration has been introduced in the work \cite{Jakubowski_Riedle} by one of us with Jakubowski.  This method is purely based on tightness arguments, since typical estimates of the expectation are not available without utilising stopping time arguments or a semi-martingale decomposition. As a consequence, although this method guarantees the existence of the stochastic integral for a large class of random integrands, it does not provide any control of the integrals. Since many applications, such as modelling dynamical systems or control problems, require upper estimates of the stochastic integrals, this method seems to be difficult to use for such applications. 

In order to provide a control of the stochastic integral, we develop a theory of stochastic integration for random integrands with respect to  cylindrical L\'evy processes with finite $p$-th weak moments for $p\in [1,2]$ in this work. Our approach enables us to develop the theory on a large class of general Banach spaces. We apply the obtained estimates to establish the existence of an abstract partial differential equations driven by a cylindrical L\'evy process with   finite $p$-th weak moments. 

Stochastic integration with respect to a cylindrical Wiener process is well  developed in Hilbert spaces and various classes of Banach spaces. Typical Banach spaces which permit a development of stochastic integration are martingale type 2 Banach spaces, see e.g.\ Dettweiler \cite{Dettweiler_1983, Dettweiler} or UMD spaces, see e.g.\ van Neerven, Veraar and Weis \cite{Neerven_Veraar_Weis-integration}.  Veraar and Yaroslavtsev
\cite{Veraar_Yaroslavtsev} extend the approach for UMD spaces in \cite{Neerven_Veraar_Weis-integration}
to continuous cylindrical  local martingales by utilising the Dambis-Dubins-Schwarz Theorem.  Stochastic integration in Hilbert spaces with respect to genuine L\'evy processes is for example presented by Peszat and Zabczyk in \cite{Peszat_Zabczyk}, and with respect to cylindrical L\'evy processes the theory is developed in \cite{Jakubowski_Riedle}.  Stochastic integration with respect to a Poisson random measure in Banach spaces is developed for example  by Mandrekar and R\"{u}diger in \cite{Mandrekar_Rudiger} and by Brze\'{z}niak, Zhu and Hausenblas \cite{Brzezniak_Hausenblas_Zhu}. 

In this work we are faced with the similar problem as in \cite{Jakubowski_Riedle}. Conventional approaches to stochastic integration utilise either stopping times or the L\'evy-It{\^o} decomposition to show continuity of the integral operator separately: firstly with respect to the martingale part with finite $2$-nd moments and secondly with respect to the bounded variation part. However, since these approaches are excluded for cylindrical L\'evy processes, we show continuity of the integral operator ``in one piece", i.e.\ without applying the semimartingale decomposition of the integrator. For this purpose, we utilise a generalised form of Pietsch's factorisation theorem, originating from the work of Schwartz \cite{Schwartz_seminar_book}. 

More specifically, the space of admissible integrands are predictable stochastic processes with values in the space of $p$-summing operators and with integrable $p$-summing norm for $p\in [1,2]$ in this work. Due to results by Kwapie{\'n} and Schwartz, for $p>1$ the space of $p$-summing operators coincides with the space of $p$-Radonifying operators, which are exactly the operators which map each cylindrical random variable with finite $p$-th weak moments to a genuine random variable. In this way, stochastic processes with values in the space of $p$-summing operators are the natural class of integrands, as they map the cylindrical increments of the integrator to the genuine random variables. Furthermore, the class of $p$-summing operators coincides with the class of Hilbert-Schmidt operators in Hilbert spaces, and as such the aforementioned space of admissible integrands is a natural generalisation of the integration theory in Hilbert spaces with respect to genuine L\'evy process in e.g.\ \cite{Peszat_Zabczyk}.
In typical applications such as the heat equation, the $p$-summing norm of the opeartors appearing in the equation can be explicitely estimated, see \cite{Brzezniak_Long}.

In Section \ref{sec_preliminaries}, we recall the concepts of cylindrical measures and cylindrical L\'evy processes. 
In Section \ref{sec_Radonification}, we present the generalised Pietsch's factorisation theorem due to Schwartz, and derive a result on the strong convergence of $p$-summing operators, which is needed in the proof of the stochastic continuity of the stochastic convolution. Section  \ref{sec_radonification_of_increments} is devoted to the construction of the stochastic integral. This is done in two steps as in the article \cite{Riedle_L2} by the second author. Firstly we Radonify the increments of the cylindrical L\'evy process by random $p$-summing operators. Secondly, we define the integral for simple integrands and extend it by continuity to the general ones. We also present some examples of the processes covered by our theory. In Section \ref{sec_existence} we apply our results to establish existence and uniqueness solution to the evolution equation driven by a cylindrical L\'evy noise with finite $p$-th weak moments for $p\in[1,2]$.

\section{Preliminaries}
\label{sec_preliminaries}

Let $E$ and $F$ be Banach spaces with separable duals $E^*$ and $F^*$.  The operator norm of an operator $u\colon E\to F$ is denoted with $\norm{u}_{\mathcal{L}(E,F)}$ or simply $\norm{u}$. We write $B_E$ for the closed unit ball in $E$. The Borel $\sigma$-field is denoted with $\mathcal{B}(E)$.

Fix a probability space $(\Omega,\mathcal{F},P)$ with a filtration $(\mathcal{F}_t)$.
We denote the space of equivalence classes of real-valued random variables equipped with the topology of convergence in probability by $L^0(\Omega,\mathcal{F},P;\R)$. The Bochner space of equivalence classes of $E$-valued, random variables with finite $p$-th moment is denoted with $L^p(\Omega,\mathcal{F},P;E)$. In case the codomain is not separable we take only separably valued random variables.

\emph{Cylindrical sets} are sets of the form
$$C(x_1^*,\ldots,x_n^*;B) = \{ x \in E : (x_1^*(x),\ldots x_n^*(x)) \in B \}$$
for $x_1^*,\ldots ,x_n^* \in E^*$ and $B \in \mathcal{B}(\R^n)$. For $\Gamma \subseteq E^*$ we denote with $\mathcal{Z}(E,\Gamma)$ the collection of all cylindrical sets with $x_1^*,\ldots, x_n^* \in \Gamma$, $B\in\mathcal{B}(\R^n)$ and $n\in\N$. If $\Gamma=E^*$, we write $\mathcal{Z}(E)$ to denote the collection of all cylindrical subsets of $E$. Note that $\mathcal{Z}(E)$ is an algebra and that if $\Gamma$ is finite, then $\mathcal{Z}(E,\Gamma)$ is a $\sigma$-algebra. A function $\mu \colon \mathcal{Z}(E) \to [0,\infty]$ is called a \emph{cylindrical measure} if its restriction to $\mathcal{Z}(E,\Gamma)$ is a measure for every finite subset $\Gamma \subseteq E^*$.
If $\mu(E)=1$ we call it a \emph{cylindrical probability measure}.
A \emph{cylindrical random variable} is a linear and continuous mapping
$$Y \colon E^* \to L^0(\Omega,\mathcal{F},P;\R).$$
The \emph{cylindrical distribution} of a cylindrical random variable $Y$ is defined by
$$\mu(C(x_1^*,\ldots,x_n^*;B)) = P((Yx_1^*,\ldots,Yx_n^*) \in B),$$
which defines a cylindrical probability measure on $\mathcal{Z}(E)$. 
The characteristic function of a cylindrical random variable (resp.\ cylindrical probability measure) is given by
$$\phi_{Y}(x^*) = \E \left[ e^{iYx^*} \right], \qquad \left(\text{resp.\ } \phi_\mu(x^*) = \int_E e^{ix^*(x)} \, \mu(\ud x)\right),$$
for $x^* \in  E^*$. We say that a cylindrical random variable $Y$ (resp.\ cylindrical measure $\mu$) is of \emph{weak order $p$} or has {\em finite $p$-th weak moments} if $\E \left[ \abs{Yx^*}^p \right] < \infty$ for every $x^* \in E^*$, (resp.\ $\int_E \abs{x^*(x)}^p \mu(\ud x)< \infty$).
We say that $Y$ is \emph{induced} by an $E$-valued random variable $X\colon\Omega\to E$ if
$$Yx^* = x^*(X) \qquad \text{ for all } x^* \in E^*.$$

A family of cylindrical random variables $(L(t) : t \geq 0)$ is called a \emph{cylindrical L\'evy process} if, for every $x_1^*,\ldots ,x_n^* \in E^*$ and $n\in \N$, we have that
$$\big( (L(t)x_1^*,\ldots, L(t)x_n^*) : t\geq 0 \big)$$
is a L\'evy process in $\R^n$ with respect to the filtration $(\mathcal{F}_t)$.
We say that $L$ is \emph{weakly $p$-integrable} if $\E \left[ \abs{L(1)x^*}^p\right]<\infty$ for every $x^* \in E^*$.
The characteristic function of $L(1)$ can be written in the form
\begin{equation*}
\phi_{L(1)}(x^*)
= \exp \left( i p(x^*) - \tfrac12 q(x^*) + \int_E \left( e^{i x^*(x)} -1 -ix^*(x)\1_{B_\R}(x^*(x)) \right) \nu(\mathrm{d}x) \right),
\end{equation*}
where $p\colon E^* \to \R$ is a continuous function with $p(0)=0$, $q\colon E^* \to \R$ is a quadratic form, and $\nu$ is a finitely additive set function on cylindrical sets of the form $C(x_1^\ast, \cdots, x_n^\ast;B)$ for 
$x_1^\ast,\dots, x_n^\ast\in E^\ast$ and $B\in\mathcal{B}(\R^n\setminus \!\{0\})$,  such that for every $x^* \in E^*$ it satisfies
$$\int_{\R\setminus\{0\}} \left( \abs{\beta}^2 \wedge 1 \right) \,(\nu\circ (x^{\ast})^{-1})(\ud x) < \infty.$$
Cylindrical L\'evy processes are introduced in \cite{Applebaum_Riedle} and further details on the characteristic function can be found in \cite{Riedle_infinitely}.

An operator $u\colon E\to F$ is called $p$-summing if there exists a constant $c$ such that
\begin{equation}
\label{p-summing_definition}
\left(\sum_{k=1}^n \norm{u(x_k)}^p \right)^{1/p} \leq c \sup\left\{ \left( \sum_{k=1}^n \abs{x^*(x_k)}^p \right)^{1/p} : x^* \in B_{E^*}\right\}
\end{equation}
for all $x_1,\ldots ,x_n \in E$ and $n\in \N$; see \cite{Diestel}.  We denote with $\pi_p(u)$ its $p$-summing norm, which is the smallest possible constant $c$ in \eqref{p-summing_definition}. The space of $p$-summing operators is denoted with $\Pi_p(E,F)$. If $E$ and $F$ are Hilbert spaces, this space coincides with the space of Hilbert-Schmidt operators denoted by $L_{\rm HS}(E,F)$ with the norm $\norm{\cdot}_{L_{\rm HS}(E,F)}$; see \cite[Th.\ 4.10 and Cor.\ 4.13]{Diestel}. Moreover, the $p$-summing norms and the Hilbert-Schmidt norm in $L_{\rm HS}(E,F)$ are equivalent. 

A Banach space $E$ is of \emph{martingale type $p\in [1,2]$} if there exists a constant $C_p$ such that for all finite $E$-valued martingales $(M_k)_{k=1}^n$ the following inequality is satisfied:
\begin{equation}
\label{martingale_type_p}
\sup_{k=1,\ldots,n} \E \big[ \norm{M_k}^p \big] \leq C_p \sum_{k=1}^n \E \big[ \norm{M_k-M_{k-1}}^p \big],
\end{equation}
where we use the convention that $M_{0}=0$; see \cite{Hytonen_Neerven_Veraar_Weis}.

We use the notation $u(\mu)$ for the push forward cylindrical measure $\mu \circ u^{-1}$ for a continuous linear function $u\colon E \to F$ and a cylindrical measure $\mu$. An operator $u \colon E \to F$ is called $p$\emph{-Radonifying} for some $p\ge 0$ if for every cylindrical measure $\mu$ on $E$ of weak order $p$, the measure $u(\mu)$ extends to a Radon measure on $F$ with finite $p$-th strong moment. 
Equivalently, the mapping $u$ is $p$-Radonifying if for every cylindrical random variable $Y$ on $E^*$ with finite weak $p$-th moment, the cylindrical random variable $Y \circ u^*$ is induced by an $F$-valued random variable with finite $p$-th strong moment; see \cite[Prop.\ VI.5.2]{Vakhania}.

A Banach space $E$ has the \emph{approximation property} if for every compact set $K \subseteq E$ and for every $\epsilon>0$ there exists a finite rank operator $u \colon E \to E$ such that $\norm{u(x)-x} \leq \epsilon $ for $x \in K$. 

A Banach space $E$ has the Radon-Nikodym property if for any probability space $(\Omega,\mathcal{F},P)$ and vector-valued measure $\mu \colon \mathcal{F} \to E$, which is absolutely continuous with respect to $P$, there exists $f\in L^1(\Omega,\mathcal{F},P;E)$ such that
$$\mu(A) = \int_A f(\omega) \, P(\ud \omega) \qquad\text{for all } A \in \mathcal{F}.$$
It is well known that every reflexive Banach space has the Radon-Nikodym property; see \cite[Cor.\ 2, p.\ 219]{Vakhania}.
It follows from \cite[Th.\ VI.5.4 and Th.\ VI.5.5]{Vakhania} that if either $p>1$ or $p=1$ and $F$ has the Radon-Nikodym property, then the classes of $p$-Radonifying and $p$-summing operators between $E$ and $F$ coincide.

\section{Some results on $p$-suming operators}
\label{sec_Radonification}

Our approach to stochastic integration with respect to a cylindrical L\'evy process is based on 
a generalisation of Pietsch's factorisation theorem, which is due to Schwartz; see \cite[p.\ 23-28]{Schwartz} and \cite{Schwartz_III}. For a measure $\mu$ on $\mathcal{B}(E)$ and $p\in [1,2]$ we define
$$\norm{\mu}_p := \left( \int_E \norm{x}^p \mu(dx)\right)^{1/p},$$
and say that  $\mu$ is  of order $p$ if $\norm{\mu}_p<\infty$.
For a cylindrical measure $\mu$ on $\mathcal{Z}(E)$ we define
$$\norm{\mu}_p^* = \sup_{x^* \in B_{E^*}} \norm{x^*(\mu)}_p,$$
and we say that $\mu$ is of weak order $p$ if $\norm{\mu}_p^\ast<\infty$.

\begin{Th}
\label{th_Schwartz}
For $p\in [1,2]$, assume either that $p>1$ or that $F$ has the Radon-Nikodym property if $p=1$. Each 
 cylindrical probability measure $\mu$ on $\mathcal{Z}(E)$ and $p$-summing map $u \colon E \to F$ satisfy
\begin{equation}
\label{crucial_inequality}
\norm{u(\mu)}_p \leq \pi_p(u) \norm{\mu}_p^*.
\end{equation}
\end{Th}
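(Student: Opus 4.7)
The plan is to derive the inequality from the Pietsch domination form of the $p$-summing property combined with a Fubini-type interchange. Since $u \in \Pi_p(E,F)$, classical Pietsch domination furnishes a Radon probability measure $\lambda$ on the weak-$\ast$ compact unit ball $B_{E^*}$ such that
$$\norm{u(x)} \le \pi_p(u) \left(\int_{B_{E^*}} \abs{x^*(x)}^p \,\lambda(\ud x^*)\right)^{1/p} \qquad \text{for all } x \in E. \qquad (\star)$$
Introducing the canonical map $J\colon E \to L^p(B_{E^*},\lambda)$ defined by $J(x) := \langle x,\cdot\rangle$, inequality $(\star)$ says exactly that $u$ admits the factorisation $u = \tilde u \circ J$, where $\tilde u\colon\overline{J(E)} \to F$ is linear with $\norm{\tilde u} \le \pi_p(u)$ (the map being well-defined on $J(E)$ precisely because $J(x)=J(x')$ forces $u(x)=u(x')$ by $(\star)$, and then extending by continuity).

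Given this factorisation I would argue in two steps. First, the moment estimate on $J(\mu)$: since $\lambda$ is a probability measure and $J(x)(x^*) = x^*(x)$, a Fubini--Tonelli interchange applied cylinder by cylinder (each cylinder involves only finitely many test functionals, so the classical Fubini theorem applies to $\mu$) yields
$$\int_E \norm{J(x)}_{L^p(\lambda)}^p\, \mu(\ud x) = \int_{B_{E^*}}\int_E \abs{x^*(x)}^p\,\mu(\ud x)\,\lambda(\ud x^*) \le (\norm{\mu}_p^\ast)^p.$$
Second, using that $\tilde u$ is a bounded linear map with $\norm{\tilde u} \le \pi_p(u)$, pushing forward once more gives
$$\norm{u(\mu)}_p^p = \int_F \norm{y}^p\,u(\mu)(\ud y) \le \pi_p(u)^p \int_{L^p(\lambda)} \norm{f}_{L^p(\lambda)}^p\,J(\mu)(\ud f) \le \pi_p(u)^p (\norm{\mu}_p^\ast)^p,$$
which, after taking $p$-th roots, is the claimed inequality.

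The step I expect to be most delicate is the Radonification: interpreting the integrals above rigorously requires that $J(\mu)$ (respectively $u(\mu)$) extend from a cylindrical measure to an honest Radon measure of order $p$ on $L^p(\lambda)$ (respectively on $F$). For $p>1$ the space $L^p(\lambda)$ is reflexive and hence has the Radon--Nikodym property, which together with the uniform $p$-th moment bound above forces $J(\mu)$ to extend to a Radon measure; the pushforward by the continuous map $\tilde u$ then transports Radonness to $F$. For $p=1$ the space $L^1(\lambda)$ typically fails the Radon--Nikodym property, and one must instead invoke the hypothesis that $F$ itself possesses that property in order to Radonify $u(\mu)$ directly. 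This is precisely the content of the Kwapie\'n--Schwartz identification of $1$-summing with $1$-Radonifying operators cited in the preliminaries as \cite[Th.\ VI.5.4 and Th.\ VI.5.5]{Vakhania}, and is the sole reason the Radon--Nikodym hypothesis appears in the statement.
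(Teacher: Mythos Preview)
The paper does not supply its own proof of this theorem; it simply cites Schwartz \cite{Schwartz,Schwartz_III,Schwartz_seminar_book}. The Remark immediately following the statement shows that the Pietsch factorisation route you take is exactly the intended one when $\mu$ is a genuine Radon measure, and explains why the cylindrical case deserves the name ``generalised Pietsch factorisation''. So your overall strategy---factor $u=\tilde u\circ J$ through $L^p(B_{E^*},\lambda)$ and push the moment bound through---is the standard one and is in the spirit of the cited references.

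That said, the two steps you flag as routine are precisely where the cylindrical hypothesis bites, and your justifications for them do not work as written. First, the displayed Fubini identity
\[
\int_E \norm{J(x)}_{L^p(\lambda)}^p\, \mu(\ud x) \;=\; \int_{B_{E^*}}\int_E \abs{x^*(x)}^p\,\mu(\ud x)\,\lambda(\ud x^*)
\]
has a left-hand side that is not defined: the integrand $x\mapsto\|J(x)\|_{L^p(\lambda)}^p=\int_{B_{E^*}}|x^*(x)|^p\,\lambda(\ud x^*)$ depends on $x$ through \emph{every} functional in the support of $\lambda$, so it is not a cylindrical function and cannot be integrated against the cylindrical measure $\mu$. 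Your parenthetical ``each cylinder involves only finitely many test functionals'' does not apply here, since the integral over $B_{E^*}$ is not a cylinder at all. This is exactly the obstacle that separates the cylindrical statement from the Radon one in the paper's Remark.

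Second, your Radonification argument is circular in one reading and false in the other. If ``the uniform $p$-th moment bound above'' means the strong moment $\|J(\mu)\|_p$, that quantity is what you are trying to define. If it means the weak bound $\|J(\mu)\|_p^*\le\|\mu\|_p^*$, then note that the Radon--Nikodym property of the target together with finite weak $p$-th moments does \emph{not} force a cylindrical measure to be Radon (the standard cylindrical Gaussian on a Hilbert space has all weak moments and the space is reflexive, yet the measure is not Radon). The correct mechanism is that $J$ itself is $p$-summing with $\pi_p(J)\le 1$, hence $p$-Radonifying into $L^p(\lambda)$ when $p>1$; but extracting the sharp constant $\|J(\mu)\|_p\le\|\mu\|_p^*$ from this still requires the nontrivial work carried out in Schwartz's references, and for $p=1$ one cannot pass through $L^1(\lambda)$ at all and must Radonify $u(\mu)$ in $F$ directly. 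Your final paragraph correctly identifies the relevant ingredient from \cite{Vakhania}, but the preceding calculation does not yet stand on its own.
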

\begin{proof}
See \cite{Schwartz} or \cite{Schwartz_III,Schwartz_seminar_book}.
\end{proof}
\begin{Rem}
Pietsch's factorisation theorem states that if $u\colon E\to F$ is a $p$-summing map then there exists a regular probability measure $\rho$ on $B_{E^\ast}$ such that 
\begin{align*}
\norm{ux}\le \pi_p(u)\left(\int_{B_{E^\ast}}\abs{x^\ast(x)}^p\, \rho(\ud x)\right)^{1/p}
\qquad\text{for all }x\in E. 
\end{align*}
If $X$ is a genuine random variable $X\colon\Omega\to E$ with probability distribution $\mu$ on $\mathcal{B}(E)$,
Pietsch's factorisation theorem immediately implies 
\begin{align*}
\norm{u(\mu)}_p^p=
\E \left[ \norm{uX}^p\right] 
\le \left(\pi_p(u)\right)^p \E\left[ \int_{B_{E^\ast}}\abs{x^\ast(X)}^p\, \rho(\ud x)\right]
\le  \left(\pi_p(u)\right)^p \norm{\mu}_p^p.
\end{align*}
For this reason, we refer to Theorem \ref{th_Schwartz} as generalised Pietsch's factorisation theorem. 
\end{Rem}

For establishing the stochastic continuity of the stochastic convolution in Section \ref{sec_existence}, we need a result on the convergence of $p$-summing operators between Banach spaces. In the case of Hilbert spaces, this convergence result can easily be seen: 
suppose that $U$ and $H$ are separable Hilbert spaces and let $\psi \colon U \to H$ be a Hilbert-Schmidt operator. If $(\phi_n)$ is a sequence of operators $\phi_n \colon H \to H$  converging strongly to $0$ as $n \to \infty$,  then the composition $\phi_n \psi$ converges to $0$ in the Hilbert-Schmidt norm.
Indeed, take $(e_n)$ an orthonormal basis of $U$ and calculate
$$\norm{\phi_n \psi}_{L_{\rm HS}(U,H)}^2 = \sum_{k=1}^\infty \norm{\phi_n \psi e_k}^2.$$
Every term in the above sum converges to $0$ as $n \to \infty$ due to  the strong convergence of $\phi_n$.  By Lebesgue's dominated convergence theorem we obtain $\norm{\phi_n \psi}_{L_{\rm HS}(U,H)}^2 \to 0$.
The following result extends this conclusion in Hilbert spaces to the Banach space setting by approximating  $p$-summing operators with finite rank operators.

\begin{Th}
\label{th_convergence_in_p_summing_norm}
Suppose that $E$ is a reflexive Banach space or a Banach space with separable dual and that $E^{**}$ has the approximation property.
If $\psi \colon E\to F$ is a $p$-summing operator and $(\phi_n)$ is a sequence of operators $\phi_n \colon F\to F$ converging strongly to $0$ then we have 
\begin{equation}
\label{convergence_in_p_summing_norm}
\pi_p(\phi_n \psi) \to 0.
\end{equation}
\end{Th}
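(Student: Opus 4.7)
The plan is to reduce the statement to the case of finite-rank $\psi$ by approximating $\psi$ in the $p$-summing norm.

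First, the strong convergence of $(\phi_n)$ to $0$ together with the Banach--Steinhaus theorem yields a uniform bound $M:=\sup_n \norm{\phi_n}_{\mathcal{L}(F,F)}<\infty$. The ideal property of the $p$-summing norm then gives
\begin{equation*}
\pi_p(\phi_n(\psi-S))\leq M\,\pi_p(\psi-S)\qquad\text{for every } S\in\mathcal{L}(E,F).
\end{equation*}
Hence if for every $\epsilon>0$ one can produce a finite-rank operator $S:E\to F$ with $\pi_p(\psi-S)<\epsilon$, and if one can show that $\pi_p(\phi_n S)\to 0$ for every finite-rank $S$, then the triangle inequality yields $\limsup_n\pi_p(\phi_n\psi)\leq M\epsilon$, and \eqref{convergence_in_p_summing_norm} follows by letting $\epsilon\to 0$.

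The finite-rank step is handled directly. Writing $S=\sum_{j=1}^m x_j^{*}\otimes f_j$ with $x_j^{*}\in E^{*}$ and $f_j\in F$, we obtain $\phi_n S=\sum_{j=1}^m x_j^{*}\otimes \phi_n f_j$. Plugging a rank-one operator $x^{*}\otimes y$ into the definition \eqref{p-summing_definition} shows $\pi_p(x^{*}\otimes y)\leq\norm{x^{*}}\norm{y}$, and subadditivity of $\pi_p$ gives
\begin{equation*}
\pi_p(\phi_n S)\leq\sum_{j=1}^m \norm{x_j^{*}}\norm{\phi_n f_j}\longrightarrow 0
\end{equation*}
by the strong convergence of $\phi_n$ to zero.

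The main obstacle is the $\pi_p$-norm approximation of $\psi$ by finite-rank operators, and this is precisely where the hypothesis on $E^{**}$ is used. The classical density theorem of Grothendieck, as refined by Persson--Pietsch and Reinov, asserts that whenever $E^{**}$ has the approximation property the finite-rank operators are $\pi_p$-dense in $\Pi_p(E,F)$ for every Banach space $F$ and every $p\in[1,\infty)$ (see e.g.\ Diestel--Jarchow--Tonge); the reflexivity or separability-of-dual assumption on $E$ places $\psi$ in a concrete enough setting to apply this density result (in the separable-dual case, one can even obtain a sequential approximation). Once an $S$ with $\pi_p(\psi-S)<\epsilon$ is in hand, combining it with the two observations above completes the proof.
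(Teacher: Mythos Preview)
Your proof is correct and follows essentially the same approach as the paper: reduce to finite-rank $\psi$ via the rank-one estimate $\pi_p(x^*\otimes y)=\norm{x^*}\norm{y}$, then use density of finite-rank operators in $\Pi_p(E,F)$ together with the uniform bound $\sup_n\norm{\phi_n}<\infty$ and the ideal property. The only difference is bibliographic: the paper invokes Corollaire~1 of Saphar \cite{Saphar} for the $\pi_p$-density step (which uses both the reflexivity/separable-dual hypothesis on $E$ and the approximation property of $E^{**}$ together), whereas your attribution of the density result is a bit diffuse---you would strengthen the write-up by citing Saphar directly rather than gesturing at Grothendieck/Persson--Pietsch/Reinov.
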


\begin{proof}
We first prove the assertion for finite rank operators  $\psi \colon E\to F$, in which case we can assume that $\psi = \sum\limits_{k=1}^N x_k^* \otimes y_k$ for some $x_k^\ast \in E^\ast$ and $y_k\in F$. Then $\phi_n \psi = \sum\limits_{k=1}^N x_k^* \otimes (\phi_n y_k)$ and since $\pi_p(x^*\otimes y)=\norm{x^*}\norm{y}$
by a simple argument (see \cite[p.\ 37]{Diestel}), 
 we estimate
$$\pi_p(\phi_n \psi) \leq \sum_{k=1}^N \pi_p(x_k^* \otimes (\phi_n y_k)) = \sum_{k=1}^N \norm{x_k^*} \norm{\phi_n y_k} \to 0, $$
because $\norm{\phi_n y_k} \to 0$ for every $k\in\{1,\dots, N\}$.

Consider now the case of a general $p$-summing operator $\psi$. Under the assumptions on $E$ and $F$, by Corollary 1 in \cite{Saphar}, the finite rank operators are dense in the space of $p$-summing operators. That is, there exists a sequence of finite rank operators $(\psi_k)$ such that $\pi_p(\psi_k-\psi) \to 0$ as $k\to \infty$. It follows that
\begin{equation}
\label{from_triangle_inequality}
\pi_p(\phi_n \psi) \leq \pi_p(\phi_n\psi_k) + \pi_p(\phi_n(\psi-\psi_k))\qquad
\text{for all }k,\, n\in\N.
\end{equation}
Fix $\epsilon>0$ and let $c:=\sup\{\norm{\phi_n}:\, n\in\N\}$.
Choose $k\in \N$ such that $\pi_p(\psi-\psi_k) \leq \frac{\epsilon}{2c}$.
Since $\psi_k$ is a finite rank operator, the argument above guarantees that there exists $n_0\in \N$ such that for all $n\geq n_0$ we have $\pi_p(\phi_n\psi_k) \leq \frac{\epsilon}{2}$. 
Inequality \eqref{from_triangle_inequality} implies for all $n\geq n_0$ that  
$\pi_p(\phi_n \psi) \leq \tfrac{\epsilon}{2} + \tfrac{\epsilon}{2} $, which completes the proof.
\end{proof}

\begin{Rem}
The proof of Theorem \ref{th_convergence_in_p_summing_norm} relies on the density of finite rank operators in the space of $p$-summing operators. This holds under more general assumption than assumed in Theorem \ref{th_convergence_in_p_summing_norm}; see  \cite[p.\ 384 and 388]{Saphar}. 

However, the result of Theorem \ref{th_convergence_in_p_summing_norm}  does not hold in the case of arbitrary Banach spaces as the following example shows. 
Choose  $E=\ell^1(\R)$ and $F=\ell^2(\R)$ and equip both spaces with the canonical basis $(e_n)$, where $e_n=(0,\ldots,0,1,0,\ldots)$. We take $\psi = \Id \colon E\to F$, which  is $1$-Radonifying by Grothendieck's Theorem; see \cite[p.\ 38-39]{Diestel}. Furthermore, we define 
 $\phi_n = e_n \otimes e_n$,  i.e.\ $\phi_n(x) =x(n)e_n=(0,\ldots,0,x(n),0,\ldots)$ for a sequence $x=(x(n))\in \ell^2(\R)$. Then $\phi_n$ converges to $0$ strongly as $n\to \infty$, but since $\phi_n \psi$ is finite rank we have $\pi_1(\phi_n \psi)=\norm{e_n}\norm{e_n}=1$ for all $n\in\N$. This counterexample shows that the assumptions on the space $E$ in Theorem \ref{th_convergence_in_p_summing_norm} cannot be dropped.
\end{Rem}


\section{Radonification of increments and Stochastic integral}
\label{sec_radonification_of_increments}

In this section we fix $p\in [1,2]$ and assume that the cylindrical L\'evy process $L$ has finite $p$-th weak moments
and  assume either that $p>1$ or that $F$ has the Radon-Nikodym property if $p=1$. 

Fix $0\le s< t\le T$. An  $\mathcal{F}_s$-measurable random variable $\Psi  \colon  \Omega \to \Pi_p(E,F)$ is called \emph{simple} if it is of the form
\begin{equation}
\label{eq_simple_random_variable2}
\Psi = \sum_{k=1}^m \1_{A_k} \psi_k,
\end{equation}
for some disjoint sets $A_1,\ldots ,A_m \in \mathcal{F}_s$ and $\psi_1,\ldots ,\psi_m \in \Pi_p(E,F)$.
The space of simple, $\mathcal{F}_s$-measurable random variables is denoted with $S:=S(\Omega,\mathcal{F}_s;\Pi_p(E,F))$, and it is equipped with the norm $\norm{\Psi}_{S,p} := \left( \E \left[ \pi_p(\Psi)^p \right] \right)^{1/p}$.

Since for $p>1$ or for $p=1$ with $F$ having the Radon-Nikodym property, each $p$-summing operator $\psi_k\colon E\to F$ is $p$-Radonifying, 
it follows that the cylindrical random variable $(L(t)-L(s))\psi_k^*$ is induced by a classical, $F$-valued random variable $X_{\psi_k}\colon \Omega\to F$: 
$$\big( L(t)-L(s) \big) \psi_k^* (x^*) = x^*(X_{\psi_k}) \qquad \text{ for all } x^* \in F^*.$$
This enables us to define the operator 
\begin{align}\label{eq.def-J}
J_{s,t} \colon S(\Omega,\mathcal{F}_s;\Pi_p(E,F)) \to L^p(\Omega, \mathcal{F},P;F), \qquad J_{s,t}(\Psi) := \sum_{k=1}^m \1_{A_k} X_{\psi_k}.
\end{align}
The following result allows us to extend the operator $J_{s,t}$ to $L^p(\Omega,\mathcal{F}_s,P;\Pi_p(E,F))$.
\begin{Lem}
\label{lem_construction_operators_J} (Radonification of the increments)\\
For fixed $0\le s <t\le T$, the operator $J_{s,t}$ defined in \eqref{eq.def-J} is continuous and satisfies
\begin{equation}
\label{norm_of_J_s_t}
\norm{J_{s,t}}_{\mathcal{L}(S,L^p)} \leq \norm{L(t-s)}_{\mathcal{L}(E^*,L^p(\Omega;\R))}, 
\end{equation}
and thus $J_{s,t}$ can be extended to $J_{s,t} \colon L^p(\Omega,\mathcal{F}_s,P;\Pi_p(E,F)) \to L^p(\Omega, \mathcal{F}_t, P; F)$. 
\end{Lem}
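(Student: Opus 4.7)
My plan is to establish the norm bound \eqref{norm_of_J_s_t} on the dense subspace $S$ of simple integrands and then invoke the bounded linear extension theorem to obtain the continuous operator on $L^p(\Omega, \mathcal{F}_s, P; \Pi_p(E,F))$. The two essential ingredients will be (a) the independence and stationarity of the increment $L(t)-L(s)$ with respect to the filtration $(\mathcal{F}_t)$, which identifies its cylindrical distribution $\mu_{s,t}$ with that of $L(t-s)$, and (b) the generalised Pietsch factorisation theorem (Theorem \ref{th_Schwartz}) applied to each $p$-summing map $\psi_k$. A preliminary observation I will record is that uniqueness (up to $P$-null sets) of the Radonification makes the assignment $\psi \mapsto X_\psi$ linear, so that $J_{s,t}(\Psi)$ does not depend on the chosen representation of $\Psi$ and $J_{s,t}$ is a well-defined linear map on $S$.

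For a simple $\Psi = \sum_{k=1}^m \1_{A_k}\psi_k$ with disjoint $A_k \in \mathcal{F}_s$, I will first note that each $X_{\psi_k}$ is measurable with respect to the $\sigma$-algebra generated by $\bigl((L(t)-L(s))(y^*)\bigr)_{y^* \in E^*}$ (since its one-dimensional projections factor through this family), hence independent of $\mathcal{F}_s$. Combining this with the disjointness of the $A_k$ yields
$$\E\bigl[\norm{J_{s,t}(\Psi)}^p\bigr] = \sum_{k=1}^m P(A_k)\,\E\bigl[\norm{X_{\psi_k}}^p\bigr].$$
A direct computation from the defining relation $x^*(X_{\psi_k}) = (L(t)-L(s))\psi_k^*(x^*)$ shows that the cylindrical distribution of $X_{\psi_k}$ equals the pushforward $\psi_k(\mu_{s,t})$, so Theorem \ref{th_Schwartz} gives
$$\E\bigl[\norm{X_{\psi_k}}^p\bigr]^{1/p} = \norm{\psi_k(\mu_{s,t})}_p \leq \pi_p(\psi_k)\,\norm{\mu_{s,t}}_p^* = \pi_p(\psi_k)\,\norm{L(t-s)}_{\mathcal{L}(E^*, L^p(\Omega;\R))},$$
where the final equality uses stationarity of the increments together with the definition of $\norm{\cdot}_p^*$.

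Substituting this estimate back collapses the sum to $\norm{L(t-s)}^p_{\mathcal{L}(E^*, L^p(\Omega;\R))}\,\E[\pi_p(\Psi)^p] = \norm{L(t-s)}^p_{\mathcal{L}(E^*, L^p(\Omega;\R))}\,\norm{\Psi}_{S,p}^p$, which is precisely \eqref{norm_of_J_s_t}. Since the simple functions are dense in $L^p(\Omega, \mathcal{F}_s, P; \Pi_p(E,F))$ by the standard Bochner approximation, and each $J_{s,t}(\Psi)$ is $\mathcal{F}_t$-measurable (because $\1_{A_k}$ is $\mathcal{F}_s$-measurable and $X_{\psi_k}$ is measurable with respect to the increment, hence $\mathcal{F}_t$-measurable), the bounded linear extension theorem delivers the continuous extension $J_{s,t}\colon L^p(\Omega, \mathcal{F}_s, P; \Pi_p(E,F)) \to L^p(\Omega, \mathcal{F}_t, P; F)$ with the same norm bound.

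The main obstacle I anticipate is the measure-theoretic bookkeeping around $\mu_{s,t}$: one must verify that the $F$-valued random variable $X_{\psi_k}$ supplied by the $p$-Radonifying property of $\psi_k$ has law equal to the Radon extension of $\psi_k(\mu_{s,t})$ and can be chosen measurable with respect to the increment, so that the independence step in the moment computation is rigorous. Both facts are consequences of the standing assumption on $F$ (ensuring $p$-summing equals $p$-Radonifying) and the characterisation of $p$-Radonifying operators recalled in Section \ref{sec_preliminaries}; once they are in place, the rest of the argument reduces to the generalised Pietsch bound and a standard extension-by-density.
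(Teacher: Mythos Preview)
Your proposal is correct and follows essentially the same approach as the paper: disjointness plus independence of $X_{\psi_k}$ from $\mathcal{F}_s$ reduce $\E[\norm{J_{s,t}(\Psi)}^p]$ to $\sum_k P(A_k)\,\E[\norm{X_{\psi_k}}^p]$, the generalised Pietsch bound (Theorem~\ref{th_Schwartz}) controls each summand by $\pi_p(\psi_k)^p\,\norm{L(t)-L(s)}_p^{*\,p}$, and stationarity identifies $\norm{L(t)-L(s)}_p^*$ with $\norm{L(t-s)}_{\mathcal{L}(E^*,L^p(\Omega;\R))}$. Your additional remarks on well-definedness and on the measurability of $X_{\psi_k}$ with respect to the increment are extra care rather than a different route; the paper treats these points as implicit and additionally invokes the closed graph theorem to note that $\norm{L(t-s)}_{\mathcal{L}(E^*,L^p(\Omega;\R))}$ is finite.
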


\begin{proof}
Let $\Psi$ be of the form \eqref{eq_simple_random_variable2}. 
Since the sets $A_k$ are disjoint it follows that 
$$\E \left[ \norm{J_{s,t}(\Psi)}^p \right] 
= \E \left[ \norm{\sum_{k=1}^m \1_{A_k} X_{\psi_k}}^p \right]
= \E \left[ \sum_{k=1}^m \1_{A_k} \norm{X_{\psi_k}}^p \right].$$
Using the fact that each $A_k$ is $\mathcal{F}_s$-measurable and that $X_{\psi_k}$ is independent of $\mathcal{F}_s$ we can calculate further 
\begin{equation}
\label{conditional_expectation_calculation}
\E \big[ \norm{J_{s,t}(\Psi)}^p \big]
=  \sum_{k=1}^m \E \big[ \E \big[ \1_{A_k} \norm{X_{\psi_k}}^p \vert \mathcal{F}_s \big] \big] 
=  \sum_{k=1}^m P(A_k)\E \big[ \norm{X_{\psi_k}}^p   \big]. 
\end{equation}
In order to estimate $\E \left[ \norm{X_{\psi_k}}^p \right]$ we apply Theorem \ref{th_Schwartz} to obtain that
\begin{align} \label{eq.Schwartz-applied}
\left( \E \left[ \norm{X_{\psi_k}}^p \right] \right)^{1/p}
\leq \pi_p(\psi_k) \norm{L(t)-L(s)}_p^*.
\end{align}
Since stationary increments of the real-valued L\'evy processes yield 
$$\left( \E \left[ \abs{(L(t)-L(s))x^*}^p\right] \right)^{1/p}
=\left( \E \left[ \abs{L(t-s)x^*}^p\right] \right)^{1/p} \qquad\text{for all }x^*\in E^*, 
$$
it follows that 
\begin{equation}
\label{cylindrical_p_norm_of_increment}
\norm{L(t)-L(s)}_p^*
= \sup_{x^*\in B_{E^*}}  \left( \E \left[ \abs{L(t-s)x^*}^p\right] \right)^{1/p}
=\norm{L(t-s)}_{\mathcal{L}(E^*,L^p(\Omega;\R))}.
\end{equation}
Note, that by the closed graph theorem and the continuity of $L(t-s) \colon E^* \to L^0(\Omega,\mathcal{F},P;\R)$, the mapping $L(t-s) \colon E^* \to L^p(\Omega,\mathcal{F},P;\R)$ is continuous.
This shows that the last expression in \eqref{cylindrical_p_norm_of_increment} is finite.
Applying estimates \eqref{eq.Schwartz-applied} and \eqref{cylindrical_p_norm_of_increment} to
\eqref{conditional_expectation_calculation} results in 
\begin{equation}
\label{expectation_of_J_s_t}
\begin{aligned}
\left(\E \left[ \norm{J_{s,t}(\Psi)}^p \right] \right)^{1/p}
&\leq \left( \sum_{k=1}^m P(A_k) \pi_p(\psi_k)^p \norm{L(t-s)}_{\mathcal{L}(E^*,L^p(\Omega;\R))}^p \right)^{1/p} \\
&= \norm{L(t-s)}_{\mathcal{L}(E^*,L^p(\Omega;\R))} \left(\E \big[ \pi_p(\Psi)^p \big]\right)^{1/p},
\end{aligned}
\end{equation}
which proves \eqref{norm_of_J_s_t}.
\end{proof}

For defining the stochastic integral below, let $\Lambda(\Pi_p(E,F))$ denote the space of predictable processes $\Psi\colon [0,T]\times\Omega \to \Pi_p(E,F)$ such that
$$\norm{\Psi}_\Lambda
:= \left( \E \left[ \int_0^T \pi_p(\Psi(s))^p \,\ds \right] \right)^{1/p}
<\infty,$$
that is $\Lambda(\Pi_p(E,F)) = L^p\big([0,T] \times \Omega, \mathcal{P}, \ud t \otimes P; \Pi_p(E,F)\big)$, where $\mathcal{P}$ denotes the predictable $\sigma$-algebra on $[0,T] \times \Omega$.
A simple stochastic process is of the form
\begin{equation}
\label{simple_process}
\Psi(t) = \Psi_0\1_{\{0\}}(t)+ \sum_{k=1}^{N-1} \Psi_k \1_{(t_k,t_{k+1}]}(t),
\end{equation}
where $0=t_1<\cdots<t_N=T$, and each $\Psi_k$ is an $\mathcal{F}_{t_k}$-measurable,  $\Pi_p(E,F)$-valued random variable with $E[ \pi_p(\Psi_k)^p]<\infty$. 
We denote with $\Lambda_0^S(\Pi_p(E,F))$ the space of simple processes of the form \eqref{simple_process} where each $\Psi_k$ is a simple random variable of the form \eqref{eq_simple_random_variable2}, i.e.\ taking only a finite number of values.

Since for stochastic processes in $\Lambda_0^S(\Pi_p(E,F))$ the Radonification of the increments 
are defined by the operator $J_{s,t}$, we can define the integral operator by
\begin{align}\label{eq.define-I}
I\colon \Lambda_0^S(\Pi_p(E,F)) \to L^p(\Omega,\mathcal{F}_T,P;F),\qquad
I(\Psi) := \sum_{k=1}^{N-1} J_{t_k,t_{k+1}}(\Psi_k), 
\end{align}
where $\Psi$ is assumed to be of the form \eqref{simple_process}.

\begin{Lem}
\label{Lem_approximation_by_double_simple}
The space $\Lambda_0^S(\Pi_p(E,F))$ is dense in $\Lambda(\Pi_p(E,F))$
w.r.t.\ $\norm{\cdot}_{\Lambda}$.
\end{Lem}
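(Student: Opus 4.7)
The plan is a three-step approximation, reducing an arbitrary element of $\Lambda(\Pi_p(E,F))$ to an element of $\Lambda_0^S(\Pi_p(E,F))$. First, I note that $\Lambda(\Pi_p(E,F)) = L^p([0,T]\times\Omega,\mathcal{P},\ud t\otimes P;\Pi_p(E,F))$ is a Bochner $L^p$-space (strong measurability being covered by the separable-valuedness convention of Section \ref{sec_preliminaries}). By the standard Bochner density theorem, finite sums $\sum_{j=1}^n \1_{B_j}\psi_j$ with $B_j\in\mathcal{P}$ and $\psi_j\in\Pi_p(E,F)$ are dense in $\Lambda(\Pi_p(E,F))$ with respect to $\norm{\cdot}_\Lambda$, so it suffices to approximate a single elementary tensor $\1_B\psi$, with $B\in\mathcal{P}$ and $\psi\in\Pi_p(E,F)$, by elements of $\Lambda_0^S(\Pi_p(E,F))$.

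Second, I recall that $\mathcal{P}$ is generated by the algebra $\mathcal{A}$ consisting of finite disjoint unions of predictable rectangles $A\times(s,t]$ with $0\le s<t\le T$ and $A\in\mathcal{F}_s$, where the atom $\{0\}\times\Omega$ is $\ud t\otimes P$-null and can be discarded. Since $\ud t\otimes P$ is a finite measure on $[0,T]\times\Omega$, a standard monotone-class/Carathéodory-approximation argument produces, for every $B\in\mathcal{P}$, a sequence $(B_n)\subseteq\mathcal{A}$ with $(\ud t\otimes P)(B\triangle B_n)\to 0$. The trivial scaling $\pi_p(\1_C\psi)=\1_C\pi_p(\psi)$ then gives
\[
\norm{\1_B\psi-\1_{B_n}\psi}_\Lambda = \pi_p(\psi)\bigl((\ud t\otimes P)(B\triangle B_n)\bigr)^{1/p}\to 0,
\]
so the problem reduces to showing $\1_B\psi\in\Lambda_0^S(\Pi_p(E,F))$ for each $B\in\mathcal{A}$.

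Third, I write $B\in\mathcal{A}$ as a finite disjoint union $\bigsqcup_j A_j\times(s_j,t_j]$ with $A_j\in\mathcal{F}_{s_j}$, and I choose a partition $0=t_1<\cdots<t_N=T$ containing all endpoints $s_j,t_j$. Then
\[
\1_B\psi = \sum_{k=1}^{N-1} \1_{C_k}\psi\,\1_{(t_k,t_{k+1}]},
\]
where $C_k$ is the union of those $A_j$ for which $(t_k,t_{k+1}]\subseteq(s_j,t_j]$. Each such $A_j$ lies in $\mathcal{F}_{s_j}\subseteq\mathcal{F}_{t_k}$, so $C_k\in\mathcal{F}_{t_k}$, and $\1_{C_k}\psi$ is a simple $\mathcal{F}_{t_k}$-measurable, $\Pi_p(E,F)$-valued random variable of the form \eqref{eq_simple_random_variable2}. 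Therefore $\1_B\psi$ is in $\Lambda_0^S(\Pi_p(E,F))$, which completes the approximation chain.

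I do not expect a serious obstacle: each step is a routine measure-theoretic approximation, made manageable by the pleasant scaling of the $p$-summing norm under scalar indicators. The only point that needs to be verified carefully is the Bochner density claim used in Step 1, which depends on the separable-valuedness convention from Section \ref{sec_preliminaries}; once that is accepted, the remainder of the proof is mechanical.
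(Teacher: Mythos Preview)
Your proposal is correct. The paper itself does not give an independent proof: it simply points to the construction in \cite[Prop.\ 4.22(ii)]{Da_Prato_Zabczyk}, which builds approximating simple processes directly via a step-function (time-discretisation) construction. Your route is slightly different in spirit: instead of writing down explicit step approximants, you first invoke Bochner density to reduce to elementary tensors $\1_B\psi$, then use Carath\'eodory approximation of $B\in\mathcal{P}$ by finite unions of predictable rectangles, and finally rewrite such unions in the required partition form. Both arguments are standard; yours is perhaps more modular and makes the role of the separable-valuedness convention for $\Pi_p(E,F)$ explicit, while the Da~Prato--Zabczyk construction is more hands-on and yields an approximating sequence one can write down concretely. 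Either way the result is routine, and your three steps are all sound.
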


\begin{proof}
The result follows from the construction in the proof of \cite[Prop.\ 4.22(ii)]{Da_Prato_Zabczyk}.
\end{proof}

\begin{Th} (stochastic integration)\\
\label{th_continuity_of_integral}
Assume that the cylindrical L\'evy process $L$ has the characteristics $(b,0,\nu)$ 
and satisfies 
\begin{equation}
\label{finiteness_p_integral}
\int_E \abs{x^*(x)}^p \, \nu(\ud x) < \infty, \qquad \text{ for all } x^* \in E^*
\end{equation}
and that $F$ is of martingale type $p$ and has the Radon-Nikodym property if $p=1$. 
Then the integral operator $I$ defined in \eqref{eq.define-I} is continuous and extends to the operator
$$I \colon \Lambda(\Pi_p(E,F)) \to L^p(\Omega,\mathcal{F}_T,P;F).$$
\end{Th}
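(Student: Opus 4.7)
The plan is to prove continuity of $I$ on the simple processes $\Lambda_0^S(\Pi_p(E,F))$ with respect to $\norm{\cdot}_\Lambda$ and then invoke Lemma~\ref{Lem_approximation_by_double_simple} to obtain the extension. The key ingredients are the single-increment estimate of Lemma~\ref{lem_construction_operators_J}, the martingale type $p$ property \eqref{martingale_type_p} of $F$, and a careful handling of the drift of $L$.

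The first step is to establish a linear-in-$t$ bound $\norm{L(t)}_{\mathcal{L}(E^\ast,L^p(\Omega;\R))}^p\le K\,t$ for $t\in[0,T]$ and some constant $K=K(T)$. For fixed $x^\ast\in E^\ast$, decompose the real L\'evy process $L(t)x^\ast = t\,\beta(x^\ast)+M_t^{x^\ast}$, where $\beta(x^\ast):=\E[L(1)x^\ast]$ is well-defined by the weak $p$-th moment assumption and $M^{x^\ast}$ is a centered, purely discontinuous L\'evy martingale. Since $\R$ is of martingale type $p$ and \eqref{finiteness_p_integral} holds, the standard L\'evy estimate gives $\E[\abs{M_t^{x^\ast}}^p]\le C_p\,t\int_E\abs{x^\ast(x)}^p\,\nu(\ud x)$. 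The closed graph theorem applied to $L(1)\colon E^\ast\to L^p(\Omega;\R)$ shows that both $\abs{\beta(x^\ast)}$ and $x^\ast\mapsto \int_E\abs{x^\ast(x)}^p\,\nu(\ud x)$ are bounded over $x^\ast\in B_{E^\ast}$; combined with $t^p\le T^{p-1}t$, this yields the claimed estimate.

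For the main argument, fix $\Psi=\sum_{k=1}^{N-1}\Psi_k\1_{(t_k,t_{k+1}]}\in\Lambda_0^S(\Pi_p(E,F))$ with $\Psi_k=\sum_i\1_{A_{k,i}}\psi_{k,i}$, and let $X_{k,i}$ denote the Radonification of $(L(t_{k+1})-L(t_k))\psi_{k,i}^\ast$. Since $X_{k,i}$ is independent of $\mathcal{F}_{t_k}$, its Bochner mean $y_{k,i}:=\E[X_{k,i}]$ lies in $F$ and satisfies $\norm{y_{k,i}}_F\le (t_{k+1}-t_k)\norm{\beta}\,\pi_p(\psi_{k,i})$ by a Hahn--Banach argument on the identity $x^\ast(y_{k,i})=(t_{k+1}-t_k)\beta(\psi_{k,i}^\ast x^\ast)$. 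Decompose $I(\Psi)=\widetilde I(\Psi)+D(\Psi)$ with
\begin{align*}
\widetilde I(\Psi)=\sum_{k=1}^{N-1}\sum_i\1_{A_{k,i}}(X_{k,i}-y_{k,i}),\qquad D(\Psi)=\sum_{k=1}^{N-1}\sum_i\1_{A_{k,i}}\,y_{k,i}.
\end{align*}
The partial sums of $\widetilde I(\Psi)$ form an $F$-valued $(\mathcal{F}_{t_{k+1}})$-martingale, so combining \eqref{martingale_type_p}, Jensen's inequality, Lemma~\ref{lem_construction_operators_J}, and the first step yields $\E[\norm{\widetilde I(\Psi)}^p]\le C\norm{\Psi}_\Lambda^p$. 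For the drift, pointwise $\norm{D(\Psi)}\le\norm{\beta}\int_0^T\pi_p(\Psi(s))\ds$, and H\"older's inequality gives $\E[\norm{D(\Psi)}^p]\le \norm{\beta}^p T^{p-1}\norm{\Psi}_\Lambda^p$. Adding these bounds shows continuity of $I$ on $\Lambda_0^S$, and Lemma~\ref{Lem_approximation_by_double_simple} then extends $I$ uniquely to $\Lambda(\Pi_p(E,F))$.

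The main obstacle is the drift. Two points deserve care: first, showing that $x^\ast\mapsto\int_E\abs{x^\ast(x)}^p\,\nu(\ud x)$ is bounded on $B_{E^\ast}$, which is not assumed explicitly but follows from closed-graph arguments on the full L\'evy--Khintchine formula; second, verifying that the compensator $y_{k,i}$ lies in $F$ rather than merely in $F^{\ast\ast}$, so that the Doob-type decomposition takes values in $F$ and the martingale type $p$ inequality is directly applicable to $\widetilde I(\Psi)$.
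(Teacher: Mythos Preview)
Your proof is correct and follows essentially the same strategy as the paper: decompose into drift plus centred martingale, bound the drift via H\"older and the operator norm of $x^\ast\mapsto\E[L(1)x^\ast]$, and control the martingale part via the martingale type~$p$ inequality combined with Lemma~\ref{lem_construction_operators_J} and a linear-in-$t$ estimate obtained from a closed-graph argument. The paper carries out the decomposition at the level of the cylindrical process itself (writing $L=B+M$ with $B(t)x^\ast=t\,\E[L(1)x^\ast]$ and integrating against each piece) rather than on the Radonified increments, and it handles the closed-graph issue you flag by introducing the auxiliary normed space $R_p=\{X:\sup_{t\in(0,T]} t^{-1/p}\|X(t)\|_{L^p}<\infty\}$ and showing $M\colon E^\ast\to R_p$ has closed graph---this directly yields the uniform-in-$x^\ast$ linear-in-$t$ bound without needing to argue separately that $x^\ast\mapsto\int_E|x^\ast(x)|^p\,\nu(\ud x)$ is bounded on $B_{E^\ast}$.
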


\begin{proof}
 Let $\Psi$ in $\Lambda_0^S(\Pi_p(E,F))$ be given by \eqref{simple_process} where 
 $\Psi_k$  is of the form
$$\Psi_k = \sum_{i=1}^{m_k} 1_{A_{k,i}} \psi_{k,i},$$
for some disjoint sets $A_{k,1},\ldots ,A_{k,m_k} \in \mathcal{F}_{t_k}$ and $\psi_{k,1},\ldots ,\psi_{k,m_k} \in \Pi_p(E,F)$ for all $k\in\{0,\dots, N-1\}$.

The cylindrical L\'evy process $L$ can be decomposed into $L(t)x^* = B(t)x^* + M(t)x^*$
for all $x^\ast \in E^\ast$, where  $B(t)x^* := t\, \E \left[ L(1)x^* \right]$ and $M(t)x^* := L(t)x^*-B(t)x^* $ for all $x^\ast\in E^\ast$ and $t\ge 0$. Both $B(t)\colon E^\ast \to L^1(\Omega,\mathcal{F},P;\R)$ and $M(t)\colon E^\ast \to L^1(\Omega,\mathcal{F},P;\R)$ are linear and continuous since $L(1) \colon E^* \to L^1(\Omega,\mathcal{F},P;\R)$ is continuous due to the closed graph theorem. In particular, both $B$ and $M$ are cylindrical L\'evy processes, and we can integrate separately with respect to $B$ and $M$:
\begin{equation}
\label{decomposition_of_integral}
I(\Psi) = I_B(\Psi) + I_M(\Psi).
\end{equation}
For the first integral in \eqref{decomposition_of_integral} we calculate
\begin{align*}
\norm{I_B(\Psi)}^p 
= \sup_{y^* \in B_{F^*}} \abs{y^*\left( \int_0^T \Psi(s) \, \ud B(s) \right)}^p 
= \sup_{y^* \in B_{F^*}} \abs{ \int_0^T B(1)(\Psi^*(s)y^*)\ds}^p.
\end{align*}
By H\"older's inequality with $q=\frac{p}{p-1}$ and $q=\infty$ if $p=1$ we obtain
\begin{equation*}
\begin{aligned}
\norm{I_B(\Psi)}^p
& \leq  \sup_{y^* \in B_{F^*}} T^{p/q} \int_0^T \abs{B(1)(\Psi^*(s)y^*)}^p \ds  \\
&\leq T^{p/q}\norm{B(1)}_{\mathcal{L}(E^*,\R)}^p  \int_0^T \norm{\Psi^*(s)}_{\mathcal{L}(F^*,E^*)}^p \ds.
\end{aligned}
\end{equation*}
Since $\norm{\Psi^*(s)}_{\mathcal{L}(F^*,E^*)} = \norm{\Psi(s)}_{\mathcal{L}(E,F)} \leq \pi_p(\Psi(s))$
according to \cite[page 31]{Diestel}, it follows that
\begin{equation}
\label{estimate_of_drift_integral}
\E \left[\norm{I_B(\Psi)}^p\right]
\leq T^{p/q} \norm{B(1)}_{\mathcal{L}(E^*,\R)}^p \E \left[ \int_0^T \pi_p(\Psi(s))^p \ds\right].
\end{equation}
For estimating the second term in \eqref{decomposition_of_integral}, define the Banach space 
$$R_p = \bigg\{ X\colon (0,T] \times \Omega \to \R : \text{ measurable and } \sup\limits_{t \in (0,T]} \frac{1}{t^{1/p}} \big( \E \left[ \abs{X(t)}^p \right]\big)^{1/p} <\infty \bigg\}$$
with the norm $\norm{X}_{R_p} = \sup_{t \in (0,T]} \frac{1}{t^{1/p}} \left( \E \left[ \abs{X(t)}^p  \right] \right)^{1/p}$.
By standard properties of the real valued L\'evy martingales, see e.g.\ \cite[Th.\ 8.23(i)]{Peszat_Zabczyk}, it follows that there exists a constant $c>0$ such that
\begin{equation}
\label{continuity_integral_PRM}
\E \big[ \abs{M(t)x^*}^p \big] \leq ct \int_\R \abs{\beta}^p \,(\nu \circ (x^*)^{-1}) (\ud \beta)
\qquad\text{for all }x^\ast\in E^\ast.
\end{equation}
Here, we use that the  L\'evy measure of $M(1)x^*$ is given by $\nu \circ (x^*)^{-1}$. It follows that we can consider the map $M\colon E^{\ast}\to R_p$ defined by $Mx^\ast=(M(t)x^\ast:\, t\in (0,T])$. To show that $M$ is continuous, let $x_n^*$ converges to $x^*$ in $E^*$ and $Mx_n^*$ to some $Y$ in $R_p$. It follows that $M(t)x_n^* \to Y(t)$ in $L^p(\Omega;\R)$  for every $t\in (0,T]$. On the other hand, continuity of $M(t)\colon E^\ast \to L^1(\Omega,\mathcal{F},P;\R)$ implies $M(t)x_n^* \to M(t)x^*$ in $L^0(\Omega;\R)$. Thus, $Y(t)=M(t)x^*$  for all $t\in (0,T]$ a.s., and the closed graph theorem satisfies that $M\colon E^{\ast}\to R_p$ is continuous. It follows that 
\begin{equation}
\label{estimate_norm_cyl_increment}
\norm{M(t_{k+1}-t_k)}_{L(E^*;L^p(\Omega;\R))}^p \leq (t_{k+1}-t_k) \norm{M}_{\mathcal{L}(E^{\ast},R_p)}^p. 
\end{equation}
Let $J_{t_k,t_{k+1}}$ be the operators defined in \eqref{eq.def-J} with $L$ replaced by $M$. Since $F$ is of martingale type $p$ here exists a constant $C_p>0$ such that Lemma \ref{lem_construction_operators_J} and inequality \eqref{estimate_norm_cyl_increment} imply 
\begin{align*}
\E \left[ \norm{I_M(\Psi)}^p \right]
&= \E \left[ \norm{\sum_{k=1}^{N-1} J_{t_k,t_{k+1}}(\Psi_k)}^p \right] \\
&\leq C_p \E \left[ \sum_{k=1}^{N-1} \norm{J_{t_k,t_{k+1}}(\Psi_k)}^p \right]\\
&\leq C_p \sum_{k=1}^{N-1} \norm{M(t_{k+1}-t_k)}_{\mathcal{L}(E^*;L^p(\Omega;\R))}^p \E \left[ \pi_p(\Psi_k)^p \right]\\
&\leq C_p  \norm{M}_{\mathcal{L}(E^{\ast},R_p)}^p \E \left[ \int_0^T \pi_p(\Psi(s))^p \ds \right].
\end{align*}
Together with \eqref{estimate_of_drift_integral}, this completes the proof. 
\end{proof}

By rewriting Condition \eqref{finiteness_p_integral} as 
\begin{align*}
\int_{B_{\R}}\abs{\beta}^p\, (\nu\circ (x^{\ast})^{-1})(\ud\beta) < \infty
\quad\text{and}\quad
\int_{B_{\R}^c}\abs{\beta}^p\, (\nu\circ (x^{\ast})^{-1})(\ud\beta) <\infty
\quad\text{for all }x^\ast\in E^\ast, 
\end{align*}
it follows that  Condition \eqref{finiteness_p_integral} is equivalent to 
\begin{align*}
(L(t)x^* : t\ge 0) \text{ is $p$-integrable and has finite $p$-variation for each }
x^\ast\in E^\ast.
\end{align*}
This is a natural requirement if we want to control the moments, see \cite{Luschgy_Pages,Saint_Loubert_Bie} and Remark \ref{rem_stable_case} below.
Condition \eqref{finiteness_p_integral} implies in particular that the the Blumenthal-Getoor index of $(L(t)x^* : t\ge 0)$ is at most $p$. The interplay between the integrability of the L\'evy process and its Blumenthal-Getoor index was observed also in \cite{Chong_levy,Chong}.

\begin{Exa}[Gaussian case]
Note that if $p<2$, then $L$ cannot have the Gaussian part for the assertion to hold. Indeed, let $W$ be a one-dimensional Brownian motion and suppose for contradiction that
\begin{equation}
\label{continuity_of_Wiener_integral}
\E \left[ \abs{\int_0^T \Psi(t) \, \ud W(t)}^p \right] \leq C \E \left[ \int_0^T \abs{\Psi(t)}^p \dt \right]
\end{equation}
for some constant $C$ and every real-valued predictable process $\Psi$ with $\E \left[ \int_0^T \abs{\Psi(t)}^2 \dt \right] < \infty$.
Choose for each $n\in\N$ the stochastic process $\Psi_n(t) = \1_{[0,1/n]}(t)$ for $t \in[0,T]$. 
By \cite[Sec.\ 3.478]{Gradshteyn_Ryzhik} we calculate
$$\E \left[ \abs{\int_0^T \Psi_n(t) \, \ud W(t)}^p \right] 
= \E \left[ \abs{W\left(\frac1n\right)}^p \right] 
= \left(\frac1n \right)^{\frac{p}{2}} \frac{2^{\frac{p}{2}} \Gamma\left( \frac{p+1}{2}\right)}{\sqrt{\pi}}.$$
But on the other side, since $\E \left[ \int_0^T \abs{\Psi_n(t)}^p \dt \right] = \frac1n$,  
solving \eqref{continuity_of_Wiener_integral} for $n$ yields  
$$n^{1-\frac{p}{2}} \leq \frac{C\sqrt{\pi}}{2^\frac{p}{2} \Gamma\left(\frac{p+1}{2}\right)},$$
which results in a contradiction by taking the limit as $n\to \infty$.
\end{Exa}

\begin{Exa}[Stable case]
\label{rem_stable_case}
Let $E=L^{p^\prime}({\mathcal O})$ for $p^\prime=p/(p-1)$ and some ${\mathcal O}\subseteq \R^d$. 
The canonical $\alpha$-stable cylindrical L\'evy process has the characteristic function $\phi_{L(1)}(x^\ast)=\exp(-\norm{x^\ast}^\alpha)$ for each $x^\ast\in E^\ast$; see \cite{Riedle_stable}. It follows that the real-valued L\'evy process $(L(t)x^\ast:t\ge 0)$ is symmetric $\alpha$-stable with L\'evy measure $\nu\circ (x^\ast)^{-1}(\mathrm{d}\beta)=c\frac1{\abs{\beta}^{1+\alpha}} \mathrm{d}\beta$ for a constant $c>0$. 
Condition \eqref{finiteness_p_integral} fails to hold since 
$$\int_{B_\R} \abs{\beta}^p \, \nu \circ (x^*)^{-1} ( \mathrm{d}\beta) = \infty \, \text{ for } p\leq \alpha, \qquad \int_{B_\R^c} \abs{\beta}^p \, \nu \circ (x^*)^{-1} (\mathrm{d} \beta)  = \infty \, \text{ for } p\geq \alpha.$$
One can observe in a similar way as in the Gaussian case that the stochastic integral operator with respect to the $\alpha$-stable cylindrical L\'evy process $L$  is not continuous. If $\Psi_n(t) = \1_{[0,1/n]}(t)$, then in the inequality
\begin{equation}
\label{continuity_of_stable_integral}
\E \left[ \abs{\int_0^T \Psi_n(t) \, \dL(t)}^p \right] \leq C \E \left[ \int_0^T \abs{\Psi_n(t)}^p \dt \right],
\end{equation}
the left-hand side is infinite for $p \geq \alpha$. For $p<\alpha$ we use the self-similarity of the stable processes to calculate
$$\E \left[ \abs{\int_0^T \Psi_n(t) \, \dL(t)}^p \right] 
= \E \left[ \abs{L\left( \frac{1}{n} \right)}^p \right]
= \E \left[ \frac{1}{n^{p/\alpha}} \abs{L(1)}^p \right].$$
Solving  \eqref{continuity_of_stable_integral} for $n$ yields
$$n^{(\alpha-p)/\alpha} \leq \frac{C}{\E\left[ \abs{L(1)}^p \right]},$$
which results in a contradiction by taking the limit as $n\to \infty$.

Therefore, the stochastic integral mapping with respect to the $\alpha$-stable process cannot be continuous as a mapping from $L^p([0,T]\times \Omega,\mathcal{P},\ud t \otimes P;\R)$ to $L^p(\Omega,\mathcal{F}_T,P;\R)$ for any $p>0$. A moment inequality with different powers on the left and right-hand sides was proven in the case of real-valued integrands and vector-valued integrators in \cite{Rosinski_Woyczynski_moment}. They prove for any $\alpha$-stable L\'evy process $L$ and 
$p<\alpha$ that  
$$\E \left[ \bigg( \sup_{t\leq T} \norm{\int_0^t \Psi(s) \dL(s)} \bigg)^p \right] 
\leq C \E \left[ \left( \int_0^T \abs{\Psi(s)}^\alpha \dt \right)^{p/\alpha} \right].$$
\end{Exa}



\begin{Exa}
In various publications, e.g. \cite{Liu_Zhai_first, Peszat_Zabczyk_time_regularity, Priola_Zabczyk_2nd, Riedle_OU}, specific examples of the following kind of a cylindrical L\'evy process has been studied: let $E$ be a Hilbert space with an orthonormal basis $(e_k)$ and let $L$ be given by 
\begin{equation}
\label{diagonal_cylindrical_Levy}
L(t)x = \sum_{k=1}^\infty \langle x,e_k \rangle \ell_k(t) \qquad \text{for all } x \in E,
\end{equation}
where  $(\ell_k)$ is a sequence of independent, one-dimensional L\'evy processes
$\ell_k$ with characteristics $(b_k,0,\rho_k)$. Precise conditions under which the sum converges and related results can be found in \cite{Riedle_OU}. In this case, we claim that Condition 
\eqref{finiteness_p_integral} is satisfied if and only if
$$\sum_{k=1}^\infty \left( \int_\R \abs{\beta}^p \, \rho_k(\ud \beta) \right)^{\frac{2}{2-p}} < \infty.$$
It is shown in \cite[Lem.\ 4.2]{Riedle_OU} that the cylindrical L\'evy measure $\nu$ of $L$ is given by
\begin{align*}
\nu(A) = \sum_{k=1}^\infty \rho_k \circ m_{e_k}^{-1}(A) \qquad \text{for all } A\in  \mathcal{Z}(E),
\end{align*}
where $m_{e_k} \colon \R \to E$ is given by $m_{e_k}(x) = xe_k$.
Condition \eqref{finiteness_p_integral} simplifies to
$$\int_E \abs{\langle y,x \rangle}^p \, \nu(\ud x)
= \sum_{k=1}^\infty \int_E \abs{\langle y,x \rangle}^p \, (\rho_k \circ m_{e_k}^{-1})(\ud x)
= \sum_{k=1}^\infty \abs{\langle y,e_k \rangle}^p \int_\R \abs{\beta}^p \, \rho_k(\ud \beta) 
<\infty$$
for any $y \in E$. This is equivalent to  
$$\sum_{k=1}^\infty \alpha_k \int_\R \abs{\beta}^p \, \rho_k(\ud \beta) 
<\infty \qquad \text{ for any } (\alpha_k) \in \ell^{2/p}(\R_+),$$
which results in $\left( \int_\R \abs{\beta}^p \, \rho_k(\ud \beta) \right)_{k \in \N} \in \ell^{2/p}(\R)^* = \ell^{2/(2-p)}(\R)$.
\color{black}
\end{Exa}

\begin{Exa}
Another example are \emph{cylindrical compound Poisson} process, see e.g.\ 
\cite[Ex.\ 3.5]{Applebaum_Riedle}. These are cylindrical L\'evy processes of the form 
$$L(t)x^* = \sum_{k=1}^{N(t)} Y_kx^* \qquad\text{for all } x^*\in E^*,$$
where $N$ is a real-valued Poisson process with intensity $\lambda$ and $Y_k$ are identically distributed, cylindrical random variables, independent of $N$, and 
say with cylindrical distribution $\rho$. Since the L\'evy measure of $(L(t)x^\ast:t \ge 0)$ is given by 
$\lambda \rho\circ (x^\ast)^{-1}$, it follows that Condition   \eqref{finiteness_p_integral} is satisfied
if and only if 
\begin{equation}
\label{Levy_measure_p_integrable}
\int_E \abs{x^*(x)}^p \, \rho(\ud x) < \infty  \qquad x^* \in E^*.
\end{equation}

\end{Exa}

\begin{Rem}
If $p=2$ and $E$ and $F$ are Hilbert spaces, the space of admissible integrands $\Lambda(\Pi_2(E,F))$ are given by
$$\left\{ \Psi\colon [0,T] \times \Omega \to L_{\rm HS}(E,F) : \Psi \text{ is predictable and } \E \left[ \int_0^T \norm{\Psi(s)}_{L_{\rm HS}(E,F)}^2 \ds \right] < \infty \right\},$$
as in the work \cite{Riedle_L2}. This is only suboptimal, as it is known that in this case the space of admissible integrands can be enlarged to predictable processes satisfying $$\E \left[ {\displaystyle \int_0^T} \norm{\Psi(s)Q^{1/2}}_{L_{\rm HS}(E,F)}^2 \ds \right] < \infty,$$ where $Q$ is the covariance operator associated to $L$; see \cite{Tomasz-PhD}. In this way, the space of integrands depends on the L\'evy triplet of the integrator. One can ask if a similar result is possible in our more general setting for $p<2$ and for Banach spaces by replacing the covariance operator by the quadratic variation. 
\end{Rem}

\section{Existence and uniqueness of solution}
\label{sec_existence}

In this section we apply the developed integration theory to derive the existence of an evolution equation in a Banach space under standard assumptions. For this purpose, we consider
\begin{equation}
\label{SPDE}
\begin{aligned}
\dX(t) &= \big(AX(t) + B(X(t))\big)\dt + G\big(X(t)\big)\dL(t), \\
X(0) &= X_0,
\end{aligned}
\end{equation}
where $X_0$ is an $\mathcal{F}_0$-measurable random variable in a Banach space $F$ and the driving noise $L$ is a cylindrical L\'evy process in a Banach space $E$ with finite $p$-th  weak moments and finite $p$-variation. The operator $A$ is  the generator of a $C_0$-semigroup on $F$ and $B\colon F\to F$ and $G\colon F \to \Pi_p(E,F)$ are some functions. 

\begin{Def}
A mild solution of \eqref{SPDE} is a predictable process $X$ such that 
\begin{equation}
\label{integrability_condition_in_definition_of_solution}
\sup_{t\in [0,T]} \E \left[ \norm{X(t)}^p \right] < \infty
\end{equation}
for some $p\geq 1$, and such that, for all $t\in[0,T]$, we have $P$-a.s.
$$X(t) = S(t) X_0 + \int_0^t S(t-s) B(X(s)) \ds + \int_0^t S(t-s) G(X(s)) \dL(s).$$
\end{Def}

We assume Lipschitz and linear growth condition on the coefficients $F$ and $G$ and an integrability assumption on the initial condition.
\begin{Ass}
For fixed $p\in [1,2]$ we assume: 
\begin{enumerate}
\item[(A1)] there exist a function $b\in L^1([0,T];\R)$ such that for any $x,x_1,x_2 \in F$
and $t\in [0,T]$:
\begin{align*}
\norm{S(t)B(x)} & \leq b(t)(1+\norm{x}),\\
\norm{S(t)(B(x_1)-B(x_2))} & \leq b(t)\norm{x_1-x_2}.
\end{align*}
\item[(A2)] there exist a function $g\in L^p([0,T];\R)$ such that for any $x,x_1,x_2 \in F$
and $t\in [0,T]$:
\begin{align*}
\pi_p\big(S(t)G(x)\big) &\leq g(t)(1+\norm{x}), \\
\pi_p\big(S(t)(G(x_1)-G(x_2))\big) &\leq g(t)\norm{x_1-x_2}.
\end{align*}
\item[(A3)] $X_0 \in L^p(\Omega, \mathcal{F}_0,P;F)$.
\end{enumerate}
\end{Ass}

\begin{Th}
\label{th_existence_and_uniqueness_of_solution}
Let $p\in [1,2]$ and suppose that the Banach spaces $E$ and $F$ satisfy that
\begin{enumerate}
\item[{\rm (a)}]  $E$ is reflexive or has separable dual, $E^{**}$ has the approximation property, 
\item[{\rm (b)}] $F$ is of martingale type $p$,
\item[{\rm (c)}] if $p=1$, then $F$ has the Radon-Nikodym property.
\end{enumerate}
If  $L$ is a cylindrical L\'evy process such that \eqref{finiteness_p_integral} holds with some $p\in [1,2]$,
then conditions {\rm(A1)-(A3)} imply that there exists a unique mild solution of \eqref{SPDE}.
\end{Th}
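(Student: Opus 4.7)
The plan is to apply the Banach contraction principle to the Picard operator
\begin{align*}
(\mathcal{K}X)(t) := S(t)X_0 + \int_0^t S(t-s)B(X(s))\,\ds + \int_0^t S(t-s)G(X(s))\,\dL(s)
\end{align*}
acting on the Banach space $\mathcal{H}_T$ of (indistinguishability classes of) predictable $F$-valued processes with finite norm $\norm{X}_{\mathcal{H}_T}^p := \sup_{t\in[0,T]}\E\norm{X(t)}^p$. Well-definedness of $\mathcal{K}X$ on $\mathcal{H}_T$ follows from (A3) and boundedness of $S$ on $[0,T]$ for the initial term; Lipschitz continuity in (A1) and (A2) transfers predictability from $X$ to the two integrands; and (A2) together with $X\in\mathcal{H}_T$ guarantees that $s\mapsto \1_{[0,t]}(s)S(t-s)G(X(s))$ lies in $\Lambda(\Pi_p(E,F))$, so the stochastic integral is legitimate by Theorem \ref{th_continuity_of_integral}.

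For the key estimates, I would apply Jensen's inequality with the finite measure $b(t-s)\,\ds$ to the drift and Theorem \ref{th_continuity_of_integral} to the stochastic integral, obtaining, for any $X_1,X_2\in\mathcal{H}_T$ and $t\in[0,T]$,
\begin{align*}
\E\norm{(\mathcal{K}X_1)(t)-(\mathcal{K}X_2)(t)}^p
\leq C\int_0^t \Bigl(\norm{b}_{L^1[0,T]}^{p-1}b(t-s) + g(t-s)^p\Bigr)\E\norm{X_1(s)-X_2(s)}^p\,\ds,
\end{align*}
and a completely analogous linear-growth bound (with $1+\norm{X(s)}$ replacing $\norm{X_1(s)-X_2(s)}$) showing $\mathcal{K}(\mathcal{H}_T)\subseteq\mathcal{H}_T$. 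Choosing $T^*>0$ small enough so that $C(\norm{b}_{L^1[0,T]}^{p-1}\norm{b}_{L^1[0,T^*]}+\norm{g}_{L^p[0,T^*]}^p)<1$ (possible by absolute continuity of $L^1$-integrals) makes $\mathcal{K}$ a strict contraction on $\mathcal{H}_{T^*}$; Banach's theorem gives a unique mild solution on $[0,T^*]$, which I would extend to $[0,T]$ by iterating on the intervals $[T^*,2T^*],[2T^*,3T^*],\ldots$, using the stationary and independent increments of $L$ to restart with $X(kT^*)$ as a new $\mathcal{F}_{kT^*}$-measurable initial datum in $L^p$. Alternatively, a Bielecki-type weighted norm $\sup_{t\leq T} e^{-\lambda t}\E\norm{X(t)}^p$ with $\lambda$ sufficiently large yields a contraction on all of $[0,T]$ in a single step.

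The main obstacle is controlling the stochastic integral, which is exactly where the hypotheses of the theorem are spent: (b) on martingale type, (c) on Radon--Nikodym when $p=1$, and \eqref{finiteness_p_integral} on $L$, all combine through Theorem \ref{th_continuity_of_integral} to furnish the indispensable $L^p$-estimate. Assumption (a) on $E$ enters implicitly twice: it ensures that $p$-summing and $p$-Radonifying operators coincide, so that the Radonification of cylindrical increments via Lemma \ref{lem_construction_operators_J} is available, and it guarantees that Theorem \ref{th_convergence_in_p_summing_norm} applies, which is needed to verify stochastic continuity of the stochastic convolution $t\mapsto \int_0^t S(t-s)G(X(s))\,\dL(s)$ and hence to identify a predictable modification of the fixed point. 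Uniqueness on $[0,T]$ follows by applying the same pathwise estimate to two potential solutions and invoking Gronwall's lemma.
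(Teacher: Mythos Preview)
Your proposal is correct and follows essentially the same route as the paper: Banach fixed point for the Picard map on the space of predictable processes with $\sup_{t}\E\norm{X(t)}^p<\infty$, with the stochastic integral controlled via Theorem \ref{th_continuity_of_integral} and stochastic continuity (hence a predictable modification) obtained through Theorem \ref{th_convergence_in_p_summing_norm}; the paper opts for your Bielecki-norm alternative rather than small-time iteration. One minor misattribution: assumption (a) on $E$ is \emph{not} what makes $p$-summing coincide with $p$-Radonifying or what feeds Lemma \ref{lem_construction_operators_J} --- that is supplied by $p>1$ or by the Radon--Nikodym property of $F$ in (c); assumption (a) is invoked only for Theorem \ref{th_convergence_in_p_summing_norm} in the stochastic-continuity step.
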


\begin{proof}
We define the space
$$\tilde{\mathcal{H}}_{T} := \bigg\{ X\colon [0,T]\times \Omega \to F \text{ is predictable and } \sup_{t \in [0,T]} \E \left[ \norm{X(t)}^p \right] < \infty \bigg\},$$
and a family of seminorms for $\beta \geq 0$:
$$\norm{X}_{T,\beta} := \left( \sup_{t \in [0,T]} e^{-\beta t} \E \left[ \norm{X(t)}^p \right] \right)^{1/p}.$$
Let $\mathcal{H}_T$ be the set of equivalence classes of elements $\tilde{\mathcal{H}}_T$ relative to $\norm{\cdot}_{T,0}$. Define an operator $K \colon \mathcal{H}_T \to \mathcal{H}_T$ by $K(X) := K_0(X)+ K_1(X) + K_2(X)$,
where 
\begin{align*}
 K_0(X)(t) &:= S(t)X_0, \\
K_1(X)(t) &:= \int_0^t S(t-s) B(X(s)) \ds, \\
K_2(X)(t) &:= \int_0^t S(t-s) G(X(s)) \dL(s).
\end{align*}
The Bochner integral and the stochastic integral above are well defined because $X$ is predictable and for every $t\in[0,T]$ the mappings
$$[0,t]\times F \ni (s,x) \mapsto S(t-s)B(x), \qquad [0,t]\times F \ni (s,h) \mapsto S(t-s)G(x)$$
are continuous. The appropriate integrability condition follows from \eqref{finitness_of_K_1} and \eqref{finitness_of_K_2} below.

For applying Banach's fixed point theorem, we first show that  $K$ indeed maps to $\mathcal{H}_T$. 
Choose constants $m\geq 1$ and $\omega\in \R$ such that $\norm{S(t)} \leq m e^{\omega t}$ for each $t\geq 0$. It follows that
$$\sup_{t \in [0,T]} \E \left[ \norm{S(t) X_0}^p \right] \leq m e^{\abs{\omega} T} \E \left[ \norm{X_0}^p \right] < \infty.$$
By Assumption (A1) and H{\"o}lder's inequality, we obtain with $q=\frac{p}{p-1}$ that
\begin{equation}
\label{finitness_of_K_1}
\begin{aligned}
&\sup_{t \in [0,T]} \E \left[ \norm{\int_0^t S(t-s)B(X(s)) \ds}^p \right] \\
&\qquad \leq \sup_{t \in [0,T]} \E \left[ \left(\int_0^t b(t-s)(1+\norm{X(s)}) \ds\right)^p \right]\\
&\qquad\leq \sup_{t \in [0,T]} \E \left[  \left(\int_0^t b(t-s) ds \right)^{p/q}\int_0^t b(t-s)(1+\norm{X(s)})^p \ds \right] \\
&\qquad\leq \left( \int_0^T b(s) \ds \right)^{p/q} 2^{p-1} (1+ \norm{X}_{T,0}) \sup_{t \in [0,T]} \int_0^t b(t-s) \ds \\
&\qquad= \left( \int_0^T b(s) \ds \right)^{1+p/q} 2^{p-1} (1+ \norm{X}_{T,0})  \\
&\qquad <\infty.
\end{aligned}
\end{equation} 
Similarly, we conclude from Assumption (A2) and Theorem \ref{th_continuity_of_integral} that 
there exists a constant $c>0$ such that
\begin{equation}
\label{finitness_of_K_2}
\begin{aligned}
&\sup_{t \in [0,T]} \E \left[ \norm{\int_0^t S(t-s)G(X(s)) \dL(s)}^p \right]\\
&\qquad \leq c \sup_{t \in [0,T]} \E \left[ \int_0^t \pi_p(S(t-s)G(X(s)))^p \ds \right] \\
&\qquad \leq c \sup_{t \in [0,T]} \E \left[ \int_0^t g(t-s)^p(1+\norm{X(s)})^p \ds \right] \\
&\qquad\leq c 2^{p-1}(1+\norm{X}_{T,0}^p) \int_0^T g(s)^p \ds \\
&\qquad < \infty.
\end{aligned}
\end{equation} 
Next, we establish that $K$ is stochastically continuous. For this purpose, let $\epsilon>0$. 
For each $t\ge 0$ we obtain 
\begin{align*}
&\E \left[ \norm{K_1(t+\epsilon)-K_1(t)} \right] \\
&= \E \left[ \norm{\int_0^{t+\epsilon} S(t+\epsilon-s)B(X(s)) \ds - \int_0^t S(t-s)B(X(s)) \ds} \right] \\
&= \E \left[ \norm{\int_t^{t+\epsilon} S(t+\epsilon-s)B(X(s)) \ds + \int_0^t (S(\epsilon)-\mathrm{Id})S(t-s)B(X(s)) \ds} \right] \\
&\leq \E \left[ \int_t^{t+\epsilon} \norm{S(t+\epsilon-s)B(X(s))} \ds + \int_0^t \norm{(S(\epsilon)-\mathrm{Id})S(t-s)B(X(s))} \ds \right] \\
&=: I_1 + I_2.
\end{align*}
Since $\norm{X(s)} \leq 1+ \norm{X(s)}^p $ for all $s\ge 0$, it follows, for $\epsilon \to 0$, that
$$I_1
\leq \E \left[ \int_t^{t+\epsilon} b(t+\epsilon-s) (1+\norm{X(s)}) \ds \right]
\leq (2+\norm{X(s)}^p) \int_0^\epsilon b(s) \ds
\to 0.$$
With the same estimate $\norm{X(s)} \leq 1+ \norm{X(s)}^p $ we obtain 
\begin{align*}
\norm{(S(\epsilon)-\mathrm{Id})S(t-s)B(X(s))}
&\leq (1+me^{\abs{\omega}})b(t-s)(1+\norm{X(s)}) \\
&\leq (2+\norm{X}_{T,0}^p)(1+me^{\abs{\omega}}) b(t-s). 
\end{align*}
Since the integrand of $I_2$  tends to $0$ as $\epsilon \to 0$ by the strong continuity of the semigroup, Lebesgue's dominated convergence theorem shows that $I_2$ tends to $0$ as $\epsilon \to 0$. 

For $K_2$ we obtain by  Theorem \ref{th_continuity_of_integral} that  there exists a constant $c>0$ such that
\begin{align*}
&\E \left[ \norm{K_2(t+\epsilon)-K_2(t)}^p \right] \\
&= \E \left[ \norm{\int_0^{t+\epsilon} S(t+\epsilon-s)G(X(s)) \dL(s) - \int_0^t S(t-s)G(X(s)) \dL(s)}^p \right] \\
&= \E \left[ \norm{\int_t^{t+\epsilon} S(t+\epsilon-s)G(X(s)) \dL(s) + \int_0^t (S(\epsilon)-\mathrm{Id})S(t-s)G(X(s)) \dL(s)}^p \right] \\
&\leq 2^{p-1} \E \left[ \norm{\int_t^{t+\epsilon} S(t+\epsilon-s)G(X(s)) \dL(s)}^p + \norm{\int_0^t (S(\epsilon)-\mathrm{Id})S(t-s)G(X(s)) \dL(s)}^p \right] \\
&\leq c 2^{p-1} \E \left[ \int_t^{t+\epsilon} \pi_p(S(t+\epsilon-s)G(X(s)))^p \ds + \int_0^t \pi_p((S(\epsilon)-\mathrm{Id})S(t-s)G(X(s)))^p \ds \right] \\
&\leq c 2^{p-1} \E \left[ \int_t^{t+\epsilon} 2^{p-1} g(t+\epsilon-s)^p (1+\norm{X(s)}^p) \ds + \int_0^t \pi_p((S(\epsilon)-\mathrm{Id})S(t-s)G(X(s)))^p \ds \right] \\
&=: c 2^{p-1}(2^{p-1}J_1 + J_2), 
\end{align*}
where
$$J_1 
= \E \left[ \int_t^{t+\epsilon} g(t+\epsilon-s)^p (1+\norm{X(s)}^p) \ds \right]
\leq (1+\norm{X}_{T,0}^p) \int_t^{t+\epsilon} g(t+\epsilon-s)^p \ds
\to 0$$
as $\epsilon \to 0$, and
$$J_2
= \E \left[ \int_0^t \pi_p((\Id-S(\epsilon))S(t-s)G(X(s)))^p \ds \right].$$
By Theorem \ref{th_convergence_in_p_summing_norm} the integrand $\pi_p((\Id-S(\epsilon))S(t-s)G(X(s)))^p$ converges to $0$ for all $t$ and $\omega\in \Omega$. 
Moreover it is bounded by $(1+me^{\abs{\omega}})^p g(t-s)^p(1+\norm{X(s)})^p$, which is $\ud t\otimes P$-integrable. Thus, Lebesgue's theorem on dominated convergence implies that $J_2\to 0$ as $\epsilon \to 0$ which completes the proof of stochastic continuity of $K$. In particular, stochastic continuity guarantees the existence of a predicable modification of $K$ by \cite[Prop.\ 3.21]{Peszat_Zabczyk}. 
In summary, we obtain that $K$ maps $\mathcal{H}_T$  to $\mathcal{H}_T$. 

For applying Banach's fixed point theorem it is enough to show that $K$ is a contraction for some $\beta$. We have
$$\norm{K(X_1) - K(X_2)}_{T,\beta}^p
\leq 2^{p-1} \left( \norm{K_1(X_1)-K_1(X_2)}_{T,\beta} + \norm{K_2(X_1)-K_2(X_2)}_{T,\beta} \right).$$
For the part corresponding to the drift we calculate similarly to \cite[Th.\ 9.29]{Peszat_Zabczyk}
\begin{align*}
&\norm{K_1(X_1)-K_1(X_2)}_{T,\beta}^p \\
&\leq \sup_{t \in [0,T]} e^{-\beta t} \E \left[ \left( \int_0^t b(t-s) \norm{X_1(s)-X_2(s)} \ds \right)^p \right] \\
&= \sup_{t \in [0,T]} e^{-\beta t} \E \left[ \left( \int_0^t b(t-s)^{1/q} b(t-s)^{1/p} \norm{X_1(s))-X_2(s)} \ds \right)^p \right] \\
&\leq \left(\int_0^T b(t-s) \ds \right)^{p/q} \sup_{t \in [0,T]} e^{-\beta t} \int_0^t b(t-s) \E \left[ \norm{X_1(s))-X_2(s)}^p \right] \ds  \\
&= \left(\int_0^T b(s) \ds \right)^{p/q} \sup_{t \in [0,T]} e^{-\beta t} \int_0^t b(t-s)e^{\beta s} e^{-\beta s}\E \left[ \norm{X_1(s)-X_2(s)}^p \right] \ds \\
&\leq \left(\int_0^T b(s) \ds \right)^{p/q} \norm{X_1-X_2}_{T,\beta}^p \sup_{t \in [0,T]}  \int_0^t b(t-s)e^{-\beta(t-s)} \ds \\
&= C(\beta) \norm{X_1-X_2}_{T,\beta}^p
\end{align*}
with $C(\beta)= \left(\int_0^T b(s) \ds \right)^{p/q} \int_0^T b(s)e^{-\beta s} \ds \to 0$ as $\beta \to \infty$.

In the following calculation for the part corresponding to the diffusion we use in the first inequality the continuity of the stochastic integral formulated in Theorem \ref{th_continuity_of_integral}:
\begin{align*}
\norm{K_2(X_1)-K_2(X_2)}_{T,\beta}^p
&\leq c \sup_{t \in [0,T]} e^{-\beta t} \E \left[ \int_0^t \pi_p( S(t-s)(G(X_1(s))-G(X_2(s))))^p \ds \right] \\
&\leq c \sup_{t \in [0,T]} e^{-\beta t} \E \left[ \int_0^t g(t-s)^p \norm{X_1(s)-X_2(s)}^p \ds \right] \\
&= c \sup_{t \in [0,T]} e^{-\beta t} \E \left[ \int_0^t g(t-s)^p e^{\beta s} e^{-\beta s}\norm{X_1(s)-X_2(s)}^p \ds \right] \\
&\leq c \norm{X_1-X_2}_{T,\beta}^p \sup_{t \in [0,T]}   \int_0^t e^{-\beta (t-s)} g(t-s)^p \ds \\
&= C'(\beta) \norm{X_1-X_2}_{T,\beta}^p,
\end{align*}
where $C'(\beta) = c \int_0^T e^{-\beta s} g(s)^p \ds \to 0$ as $\beta \to \infty$.
Consequently, Banach's fixed point theorem implies that there exists a unique $X\in \mathcal{H}_T$
such that $K(X)=X$ which completes the proof. 
\end{proof}

\begin{Rem}
Note that if $E$ and $F$ are Hilbert spaces, then they satisfy assumption (ii) in Theorem \ref{th_continuity_of_integral}, see e.g.\ \cite[Cor.\ 1, p.\ 109]{Schaefer_Wolff}. Thus for $p=2$ we recover \cite{Riedle_L2}.
\end{Rem}

\begin{Rem}
For processes of the form \eqref{diagonal_cylindrical_Levy} the integrability assumption can be relaxed to include for example stable processes in the same way as in \cite{Kosmala_Riedle} where the existence of variational solutions  is demonstrated. The details can be found in the PhD thesis of the first author.
\end{Rem}

\subsection*{Acknowledgement}
I would like to thank Przemys\l{}aw Wojtaszczyk for a helpful discussion
and Emilio Fedele for suggesting to use the finite rank operators in the proof of Theorem \ref{th_convergence_in_p_summing_norm}.


\begin{thebibliography}{10}

\bibitem{Applebaum_Riedle}
D.~Applebaum and M.~Riedle.
\newblock Cylindrical {L}\'evy processes in {B}anach spaces.
\newblock {\em Proc. Lond. Math. Soc.}, 101(3):697--726, 2010.

\bibitem{Brzezniak_Long}
Z.~Brze\'{z}niak and H.~Long.
\newblock A note on {$\gamma$}-radonifying and summing operators.
\newblock In {\em Stochastic analysis}, volume 105 of {\em Banach Center
  Publ.}, pages 43--57. Warsaw: Polish Acad. Sci. Inst. Math., 2015.

\bibitem{Chong_levy}
C.~Chong.
\newblock L\'{e}vy-driven {V}olterra equations in space and time.
\newblock {\em J. Theoret. Probab.}, 30(3):1014--1058, 2017.

\bibitem{Chong}
C.~Chong.
\newblock Stochastic {PDE}s with heavy-tailed noise.
\newblock {\em Stochastic Process. Appl.}, 127(7):2262--2280, 2017.

\bibitem{Da_Prato_Zabczyk}
G.~Da~Prato and J.~Zabczyk.
\newblock {\em Stochastic equations in infinite dimensions}.
\newblock Cambridge: Cambridge University Press, 2014.

\bibitem{Dettweiler_1983}
E.~Dettweiler.
\newblock Banach space valued processes with independent increments and
  stochastic integration.
\newblock In {\em Probability in {B}anach spaces, {IV} ({O}berwolfach, 1982)},
  volume 990 of {\em Lecture Notes in Math.}, pages 54--83. Berlin: Springer,
  1983.

\bibitem{Dettweiler}
E.~{Dettweiler}.
\newblock {Stochastic integration of Banach space valued functions.}
\newblock In {\em \textit{Stochastic space-time models and limit theorems}},
  pages 53--59. Dordrecht: D. Reidel Publishing Company, 1985.

\bibitem{Diestel}
J.~Diestel, H.~Jarchow, and A.~Tonge.
\newblock {\em Absolutely summing operators}.
\newblock Cambridge: Cambridge University Press, 1995.

\bibitem{Gradshteyn_Ryzhik}
I.~S. Gradshteyn and I.~M. Ryzhik.
\newblock {\em Table of integrals, series, and products}.
\newblock Academic Press, 2007.

\bibitem{Hytonen_Neerven_Veraar_Weis}
T.~{Hyt\"onen}, J.~{van Neerven}, M.~{Veraar}, and L.~{Weis}.
\newblock {\em {Analysis in Banach spaces. Volume I. Martingales and
  Littlewood-Paley theory.}}
\newblock Cham: Springer, 2016.

\bibitem{Jakubowski_Riedle}
A.~Jakubowski and M.~Riedle.
\newblock Stochastic integration with respect to cylindrical l\'evy processes.
\newblock {\em {Ann. Probab.}}, 45(6B):4273--4306, 2017.

\bibitem{Tomasz-PhD}
T.~Kosmala.
\newblock {\em Stochastic partial differential equations driven by cylindrical
  {L}\'evy processes}.
\newblock PhD thesis, to be submitted. 2020.

\bibitem{Kosmala_Riedle}
T.~Kosmala and M.~Riedle.
\newblock Variational solutions of stochastic partial differential equations
  with cylindrical {L}\'evy noise.
\newblock \texttt{https://arxiv.org/abs/1807.11418}.

\bibitem{Liu_Zhai_first}
Y.~Liu and J.~Zhai.
\newblock A note on time regularity of generalized {O}rnstein-{U}hlenbeck
  processes with cylindrical stable noise.
\newblock {\em C. R. Math. Acad. Sci. Paris}, 350(1-2):97--100, 2012.

\bibitem{Luschgy_Pages}
H.~Luschgy and G.~Pag\`es.
\newblock Moment estimates for {L}\'{e}vy processes.
\newblock {\em Electron. Commun. Probab.}, 13:422--434, 2008.

\bibitem{Mandrekar_Rudiger}
V.~Mandrekar and B.~R\"{u}diger.
\newblock {\em Stochastic integration in {B}anach spaces. Theory and
  applications}.
\newblock Cham: Springer, 2015.

\bibitem{Peszat_Zabczyk}
S.~Peszat and J.~Zabczyk.
\newblock {\em Stochastic partial differential equations with L\'evy noise. An
  evolution equation approach}.
\newblock Cambridge: Cambridge University Press, 2007.

\bibitem{Peszat_Zabczyk_time_regularity}
S.~Peszat and J.~Zabczyk.
\newblock Time regularity of solutions to linear equations with {L}\'evy noise
  in infinite dimensions.
\newblock {\em Stochastic Process. Appl.}, 123(3):719--751, 2013.

\bibitem{Priola_Zabczyk_2nd}
E.~Priola and J.~Zabczyk.
\newblock On linear evolution equations for a class of cylindrical {L}\'evy
  noises.
\newblock In {\em Stochastic partial differential equations and applications},
  volume~25 of {\em Quad. Mat.}, pages 223--242. Dept. Math., Seconda Univ.
  Napoli, Caserta, 2010.

\bibitem{Riedle_infinitely}
M.~Riedle.
\newblock Infinitely divisible cylindrical measures on {B}anach spaces.
\newblock {\em Studia Math.}, 207(3):235--256, 2011.

\bibitem{Riedle_L2}
M.~Riedle.
\newblock Stochastic integration with respect to cylindrical {L}\'evy processes
  in {H}ilbert spaces: an {$L^2$} approach.
\newblock {\em Infin. Dimens. Anal. Quantum Probab. Relat. Top.}, 17(1):19,
  2014.

\bibitem{Riedle_OU}
M.~Riedle.
\newblock Ornstein-{U}hlenbeck processes driven by cylindrical {L}\'evy
  processes.
\newblock {\em Potential Anal.}, 42(4):809--838, 2015.

\bibitem{Riedle_stable}
M.~Riedle.
\newblock Stable cylindrical {L}\'{e}vy processes and the stochastic {C}auchy
  problem.
\newblock {\em Electron. Commun. Probab.}, 23:Paper No. 36, 12, 2018.

\bibitem{Rosinski_Woyczynski_moment}
J.~Rosi\'nski and W.~A. Woyczy\'nski.
\newblock Moment inequalities for real and vector {$p$}-stable stochastic
  integrals.
\newblock In {\em Probability in {B}anach spaces, Proceedings of the
  International Conference held in Medford, USA, July 16-27, 1984}. Berlin:
  Springer, 1985.

\bibitem{Saint_Loubert_Bie}
E.~Saint Loubert~Bi{\'e}.
\newblock {\'E}tude d'une edps conduite par un bruit poissonnien.
\newblock {\em Prob. Theory and Related Fields}, 111(2):287--321, 1998.

\bibitem{Saphar}
P.~Saphar.
\newblock Hypoth\`ese d'approximation \`a l'ordre {$p$} dans les espaces de
  {B}anach et approximation d'applications {$p$} absolument sommantes.
\newblock {\em Israel J. Math.}, 13:379--399, 1972.

\bibitem{Schaefer_Wolff}
H.~H. Schaefer and M.~P. Wolff.
\newblock {\em Topological vector spaces}.
\newblock New York: Springer, 1999.

\bibitem{Schwartz_III}
L.~Schwartz.
\newblock Applications $p$-sommantes et $p$-radonifiantes.
\newblock {\em S\'eminaire d'analyse fonctionnelle (dit ``Maurey-Schwartz'')},
  1972-73.
\newblock Talk no 3.

\bibitem{Schwartz_seminar_book}
L.~{Schwartz}.
\newblock {Cylindrical probabilities and $p$-summing and $p$-radonifying maps.}
\newblock {In \textit{Seminar Schwartz}, Notes on Pure Mathematics 7}, 1973.

\bibitem{Schwartz}
L.~Schwartz.
\newblock {\em Geometry and probability in {B}anach spaces}.
\newblock Berlin: Springer, 1981.

\bibitem{Vakhania}
N.~N. Vakhania, V.~I. Tarieladze, and S.~A. Chobanyan.
\newblock {\em Probability distributions on {B}anach spaces}.
\newblock Dordrecht: D. Reidel Publishing Co., 1987.

\bibitem{Neerven_Veraar_Weis-integration}
J.~M. A.~M. {van Neerven}, M.~C. {Veraar}, and L.~{Weis}.
\newblock {Stochastic integration in UMD Banach spaces.}
\newblock {\em {Ann. Probab.}}, 35(4):1438--1478, 2007.

\bibitem{Veraar_Yaroslavtsev}
M.~Veraar and I.~Yaroslavtsev.
\newblock Cylindrical continuous martingales and stochastic integration in
  infinite dimensions.
\newblock {\em Electron. J. Probab.}, 21:Paper No. 59, 53, 2016.

\bibitem{Brzezniak_Hausenblas_Zhu}
J.~Zhu, Z.~Brze\'{z}niak, and E.~Hausenblas.
\newblock Maximal inequalities for stochastic convolutions driven by
  compensated {P}oisson random measures in {B}anach spaces.
\newblock {\em Ann. Inst. Henri Poincar\'{e} Probab. Stat.}, 53(2):937--956,
  2017.

\end{thebibliography}
\bibliographystyle{plain}

\end{document}